\newtheorem{theorem}{Theorem}[section]
\newtheorem{lemma}[theorem]{Lemma}
\newtheorem{proposition}[theorem]{Proposition}
\theoremstyle{remark}
\newtheorem{remark}{ Remark}
\theoremstyle{definition}
\newtheorem{definition}{Definition}
\numberwithin{equation}{section}
\newcommand{\cred}[1]{{\color{red}  #1}}
\newcommand{\R}{\mathbb{R}}
\renewcommand{\v}[1]{\ensuremath{\mathbf{#1}}}
\begin{document}
	\title{\bf A note on the convergence of multigrid methods for the Riesz-space equation and an application to image deblurring}
	\author{
  Danyal Ahmad\textsuperscript{a}\thanks{dahmad@uninsubria.it (D. Ahmad)}\,\,, Marco Donatelli\textsuperscript{a}\thanks{marco.donatelli@uninsubria.it (M. Donatelli)}\,\,, Mariarosa Mazza\textsuperscript{b}\thanks{mariarosa.mazza@uniroma2.it (M. Mazza)}\,\,, Stefano Serra-Capizzano\textsuperscript{a}\thanks{s.serracapizzano@uninsubria.it (S. Serra-Capizzano)}\,\,, Ken~Trotti\textsuperscript{c}\thanks{ken.trotti@usi.ch (K. Trotti)} 
    }
 \affil{\textsuperscript{a} Dipartimento di Scienza e Alta Tecnologia, Universit\`a dell'Insubria, Como, Via Valleggio~11, 22100 Como, Italy}
 \affil{\textsuperscript{b} Dipartimento di Matematica, Università degli studi di Roma Tor Vergata, Via della Ricerca Scientifica~1, 00133 Rome, Italy}
 \affil{\textsuperscript{c} Facoltà di informatica, Universit\`a della Svizzera Italiana, Via Giuseppe Buffi~13, CH-6900 Lugano, Switzerland}
	\date{}
	\maketitle
	
	\numberwithin{equation}{section}

\begin{abstract}
In the past decades, a remarkable amount of research has been carried out regarding fast solvers for large linear systems resulting from various discretizations of fractional differential equations (FDEs).
In the current work, we focus on multigrid methods for a Riesz-space FDE whose theoretical convergence analysis of such multigrids is currently limited to the two-grid method. Here we provide a detailed theoretical convergence study in the case of V-cycle and W-cycle. Moreover, we discuss its use combined with a band approximation and we compare the result with both $\tau$ and circulant preconditionings. The numerical tests include 2D problems as well as the extension to the case of a Riesz-FDE with variable coefficients. Finally, we apply the best-performing method to an image deblurring problem with Tikhonov regularization.
\end{abstract}
	
\section{Introduction}

In the present work, we are interested in the fast numerical solution of special large linear systems stemming from the approximation of a Riesz fractional diffusion equation (RFDE). It is known that using standard finite difference schemes on equispaced gridding, in the presence of one-dimensional problems we end up with a dense real symmetric Toeplitz linear system, whose dense structure is inherited from the non-locality of the underlying symmetric operator.
We study a one-dimensional RFDE discretized with a numerical scheme with precision order one mainly for the simplicity of the presentation. Nevertheless, the same analysis can be carried out either when using higher-order discretization schemes \cite{high-order-1,mazza2023matrices} or when dealing with more complex higher-dimensional fractional models \cite{meerschaert2004finite,complex-1}.

While the matrix-vector product involving dense Toeplitz matrices has a cost of $O(M\log M)$ with a moderate constant in the ``big O" and with $M$ being the matrix size, the solution can be of higher complexity in the presence of ill-conditioning. In connection with the last issue, it is worth recalling that the Toeplitz structure carries a lot of spectral information (see \cite{GLT_I_book,GLT_Eng} and references therein). As shown in \cite{donatelli2016spectral,mazza2023matrices} the Euclidean conditioning of the coefficient matrices grows exactly as $M^\alpha$.
Therefore the classical conjugate gradient (CG) method would need a number of iterations exactly proportional to $M^{\alpha/2}$ to converge within a given precision.

In this perspective, a large number of preconditioning strategies for the preconditioned CG (PCG) have been proposed. One possibility is given by band-approximations of the coefficient matrices, which as discussed in \cite{donatelli2016spectral,donatelli2020multigrid} does not ensure a convergence within a number of iterations independent of $M$, but still prove to be efficient for some values of the fractional order and whose cost stays linear within the matrix-size. As an alternative, the circulant proposal given in \cite{lei2013circulant} has a cost proportional to the matrix-vector product and ensures convergence in the presence of 1D problems that do not include variable coefficients. Another algebraic preconditioner proven to be optimal consists in the $\tau$ preconditioning explored in \cite{NMTMA,BEV}. In a multi-dimensional setting, the multilevel circulants are not well-suited \cite{capizzano2000any}, while the $\tau$ preconditioning still shows optimal behavior. For other effective preconditioners, we refer the reader to \cite{ACETO2023372} where the proposed preconditioner is based on a rational approximation of the Riesz operator.

Other approaches to the efficient solution of the ill-conditioned linear systems we are interested in include multigrid methods. Optimal multigrid proposals for fractional operators are given e.g. in \cite{pang2012multigrid,moghaderi2017spectral,DMS18,donatelli2020multigrid}. However, a theoretical convergence analysis of such multigrids is currently limited to the two-grid method. Here we provide a detailed theoretical convergence study in the case of V-cycle and W-cycle. Moreover, as multigrid methods are often applied as preconditioners themselves, and often to an approximation of the coefficient matrix instead of to the coefficient matrix itself, on the same line of what has been done in \cite{donatelli2020multigrid}, we discuss a band approximation to be solved with a Galerkin approach which guarantees linear cost in $M$ instead of $O(M\log M)$ and that inherits the V-cycle optimality. We compare the resulting methods with both $\tau$ and circulant preconditioning and apply the best performing strategy to an image deblurring problem with Tikhonov regularization.

The paper is organized as follows. In Section \ref{sec:setting} we present the problem, the chosen numerical approximation, and the resulting linear systems. The main spectral features of the considered coefficient matrices are also recalled. In Section \ref{sec:multigrid} we describe the essentials of multigrid methods and give the description of our proposal and its analysis, while in Section \ref{sec:band_approximation} we discuss its use applied to a band approximation of the coefficient matrix. Section~\ref{sec:num} contains a wide set of numerical experiments that compare the multigrid with state-of-the-art preconditioners and include 2D problems as well as the extension to the case of an RFDE with variable coefficients. Section~\ref{sec:imdeblur} is devoted to applying the best preconditioning strategy to an image deblurring problem. Future research directions and conclusions are given in Section \ref{sec:end}.

\section{Preliminaries}\label{sec:setting}

The current section briefly introduces the continuous problem, its numerical approximation scheme, and the resulting linear systems. The main spectral features of the considered coefficient matrices are recalled as well.

As anticipated in the introduction, we consider the following one-dimensional Riesz Fractional Diffusion Equation (1D-RFDE) given by
\begin{align}\label{eq:RFDE}
			-d\frac{\partial^{\alpha} u(\,x)\,}{\partial\vert x\vert^{\alpha}} =m(\,x)\,,\quad x\in\Omega=[a,b]\,
	\end{align}
	with boundary conditions
	\begin{align}\label{eq:BC_RFDE}
			u(\,x)\ =0,\quad x\in\partial\Omega,
	\end{align}
where $d>0$ is the diffusion coefficient, $m(x)$ is the source term, and \begin{math}\frac{\partial^{\alpha} u(\,x)\,}{\partial\vert x\vert^{\alpha}}\end{math} is the  1D-Riesz fractional derivative for $\alpha\in(\,1,2)$, defined as
	\begin{align}\label{eq:RFDE_LR}
\frac{\partial^{\alpha} u(\,x)\,}{\partial\vert x\vert^{\alpha}}=c(\alpha)({_{a}{{{D}}_{x}^{\alpha}}u(\,x)\,}+{_{x}{{{D}}_{b}^{\alpha}}u(\,x)\,}),			
	\end{align}
with $c(\,\alpha)\,=-\frac{1}{2\cos{\frac{\alpha\pi}{2}}}>0$, while ${_{a}{\mathcal{D}_{x}^{\alpha}}u(\,x)\,}$ and ${_{x}{\mathcal{D}_{b}^{\alpha}}u(\,x)\,}$ are the left and right Riemann-Liouville fractional derivatives.
	
For the approximation of the problem, we define the uniform space partition on the interval $[\,a,b]$, that is
	\begin{equation}\label{eq:partition}
	 x_{i} = a+ih,\quad h=\frac{b-a}{M+1},\quad i=1,\dots,M.
	\end{equation}
Here the left and right fractional derivatives in \eqref{eq:RFDE_LR} are numerically approximated by the shifted Gr\"unwald difference formula as
	\begin{align}\label{eq:LR_WSGD}
	\begin{split}
		{_{a}{{D}_{x}^{\alpha}}u(x_{i})}&=\frac{1}{h^{\alpha}}\sum_{k=0}^{i+1}g_{k}^{(\,\alpha)\,}u(\,x_{i-k+1})\,+O{(\,h)\,},\\
		{_{x}{{D}_{b}^{\alpha}}u(x_{i})}&=\frac{1}{h^{\alpha}}\sum_{k=0}^{M-i+2}g_{k}^{(\,\alpha)\,}u(\,x_{i+k-1})\,+O{(\,h)\,},
	\end{split}
	\end{align}
	with $g_{k}^{(\,\alpha)\,}$ being the alternating fractional binomial coefficients, whose expression is
	\begin{equation}\label{eq:g_Coef}
	    g_{k}^{(\,\alpha)\,}=(-1)^{k}\binom{\alpha}{k}=\frac{(-1)^{k}}{k!}{\alpha(\,\alpha-1)\,\cdots(\,\alpha-k+1)\,}, \quad k\in \mathbb{N}_0.
	\end{equation}
	By employing the approximation in \eqref{eq:LR_WSGD} to equation in \eqref{eq:RFDE} and using \eqref{eq:RFDE_LR} and \eqref{eq:g_Coef}, we obtain the required scheme as follows
	\begin{align}\label{eq:scheme}
			-d\frac{c(\alpha)}{h^{\alpha}}\Big(\sum_{k=0}^{i+1}g_{k}^{(\,\alpha)\,}u_{i-k+1}\,+\sum_{k=0}^{M-i+2}g_{k}^{(\,\alpha)\,}u_{i+k-1}\Big)=m_{i},\quad i=1,\dots,M.
	\end{align}
	By defining ${\bf u}=[\,u_{1},u_{2},\dots,u_{M}]\,^{T}$ and ${\bf m}=[\,m_{1},m_{2},\dots,m_{M}]\,^{T}$, the resulting approximation equations \eqref{eq:scheme} can be written in matrix form as
		\begin{align*}
		\overline{c}{({\mathcal{G}^{\alpha}_{M}}+{{\mathcal{G}^{\alpha}_{M}}}^{T})}{{\bf u}}={\bf m}
	\end{align*}
and, by defining $\mathcal{\bf G}_{M}^{\left(\alpha\right)}={\mathcal{G}^{\alpha}_{M}}+{{\mathcal{G}^{\alpha}_{M}}}^{T}$, we have
		\begin{align}\label{eq:Final_Linear_sys}
		{A^{\alpha}_{M}}{{\bf u}}={\bf m},
	\end{align}
	with ${A^{\alpha}_{M}}=\overline{c}\,\mathcal{\bf G}_{M}^{\left(\alpha\right)}$, 
	$\overline{c}=-\frac{dc(\alpha)}{h^{\alpha}}$, and
	\begin{equation}\label{eq:LH_matrix}
    {\mathcal{G}^{\alpha}_{M}}=
	\begin{bmatrix}
		& {g_{1}^{(\alpha)\,}} & {g_{0}^{(\alpha)\,}} & 0 & \cdots & 0 & 0 &\\
		& {g_{2}^{(\alpha)\,}} & {g_{1}^{(\alpha)\,}} & {g_{0}^{(\alpha)\,}} & \cdots & 0 & 0 &\\
		& \vdots & \ddots & \ddots & \ddots &\ddots & \vdots&\\
		& \vdots & \ddots & \ddots &\ddots &\ddots & \vdots&\\
		& {g_{M-1}^{(\alpha)\,}} & {g_{M-2}^{(\alpha)\,}} & \cdots & \cdots & {g_{1}^{(\alpha)\,}} & {g_{0}^{(\alpha)\,}} &\\
		& {g_{M}^{(\alpha)\,}} & {g_{M-1}^{(\alpha)\,}} &\cdots & \cdots & {g_{2}^{(\alpha)\,}} & {g_{1}^{(\alpha)\,}}&
	\end{bmatrix}_{{M\times M}}
\end{equation}
being a lower Hessenberg Toeplitz matrix. The fractional binomial ${g_{k}^{(\alpha)\,}}$ coefficients satisfy few properties summarised in the following Lemma \ref{Lemma1} (see \cite{meerschaert2004finite,meerschaert2006finite,wang2010direct}).
	\begin{lemma}\label{Lemma1} For $1<\alpha<2$, the coefficients $g_{k}^{(\alpha)\,}$ defined in \eqref{eq:g_Coef} satisfy
		\begin{equation}
			\begin{cases}
			        g_{0}^{(\alpha)\,}=1,\quad g_{1}^{(\alpha)\,}=-\alpha,\quad g_{0}^{(\alpha)\,}>g_{2}^{(\alpha)\,}>g_{3}^{(\alpha)\,}>\dots>0,\\
    		        \sum_{k=0}^{\infty}{g_{k}^{(\alpha)\,}}=0,\quad \sum_{k=0}^{n}{g_{k}^{(\alpha)\,}}<0,\quad \mbox{for} \quad n\geq 1.
			\end{cases}
		\end{equation}
\end{lemma}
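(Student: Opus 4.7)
The lemma is a collection of classical facts about the Gr\"unwald weights, and my plan is to verify each one directly from the closed form \eqref{eq:g_Coef}. The values $g_0^{(\alpha)}=1$ and $g_1^{(\alpha)}=-\alpha$ follow by plugging $k=0,1$ into \eqref{eq:g_Coef}.

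For the positivity of $g_k^{(\alpha)}$ when $k\geq 2$, I would count signs in the product $\alpha(\alpha-1)(\alpha-2)\cdots(\alpha-k+1)$. Since $\alpha\in(1,2)$, exactly the first two factors $\alpha$ and $\alpha-1$ are positive while the remaining $k-2$ factors are negative, so the product has sign $(-1)^{k-2}=(-1)^k$, which cancels the leading $(-1)^k$ in \eqref{eq:g_Coef} and yields $g_k^{(\alpha)}>0$. For strict monotonicity $g_k^{(\alpha)}>g_{k+1}^{(\alpha)}$ for $k\geq 2$, I would compute the ratio
\begin{equation*}
\frac{g_{k+1}^{(\alpha)}}{g_k^{(\alpha)}}=\frac{k-\alpha}{k+1},
\end{equation*}
which lies in $(0,1)$ because $k-\alpha>0$ and $k-\alpha<k+1$ for $k\geq 2$. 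The remaining comparison $g_0^{(\alpha)}>g_2^{(\alpha)}$ reduces to $\alpha(\alpha-1)/2<1$, equivalent to $(\alpha-2)(\alpha+1)<0$, which is true on $(1,2)$.

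For the infinite sum, I would invoke the generalized binomial series
\begin{equation*}
(1-x)^\alpha=\sum_{k=0}^{\infty}(-1)^k\binom{\alpha}{k}x^k=\sum_{k=0}^{\infty}g_k^{(\alpha)}x^k,
\end{equation*}
valid for $|x|<1$. Since $\alpha>0$ implies $|g_k^{(\alpha)}|=O(k^{-\alpha-1})$, the series is absolutely convergent up to the boundary $x=1$, and evaluating there gives $\sum_{k=0}^\infty g_k^{(\alpha)}=0$; alternatively, Abel's theorem suffices. Finally, for the partial-sum inequality, the case $n=1$ is immediate since $g_0^{(\alpha)}+g_1^{(\alpha)}=1-\alpha<0$. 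For $n\geq 2$, writing
\begin{equation*}
\sum_{k=0}^{n}g_k^{(\alpha)}=-\sum_{k=n+1}^{\infty}g_k^{(\alpha)}
\end{equation*}
and using the positivity of each $g_k^{(\alpha)}$ for $k\geq 2$ already established gives the strict negativity of the partial sum.

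The only step that is genuinely more than bookkeeping is the justification of the binomial identity at $x=1$; everything else is algebra on \eqref{eq:g_Coef}. I do not anticipate any real obstacle, which is consistent with the lemma being quoted from the literature rather than reproved in full.
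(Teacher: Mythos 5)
Your proof is correct: the sign count on the factors of $\alpha(\alpha-1)\cdots(\alpha-k+1)$, the ratio $g_{k+1}^{(\alpha)}/g_k^{(\alpha)}=(k-\alpha)/(k+1)\in(0,1)$ for $k\ge 2$, the evaluation of $(1-x)^\alpha$ at $x=1$ via absolute convergence (or Abel's theorem), and the rewriting of the partial sum as minus a strictly positive tail all check out. The paper itself does not prove Lemma \ref{Lemma1} but quotes it from \cite{meerschaert2004finite,meerschaert2006finite,wang2010direct}, so there is no in-paper argument to compare against; your direct verification is the standard one given in those sources.
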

Using Lemma \ref{Lemma1}, it has been proven in \cite{wang2010direct} that the coefficient matrix ${{A^{\alpha}_{M}}}$ is strictly diagonally dominant and hence invertible. To determine its generating function and study its spectral distribution, tools on Toeplitz matrices have been used (see \cite{donatelli2016spectral}). A short review of these results follows.

\begin{definition}\label{def1_fourier_coefficients}
    \cite{GLT_I_book}
Given $f\in L^1(-\pi,\pi)$ and its Fourier coefficients
\begin{equation}\label{eq:fourier_coef}
		a_{k}=\frac{1}{2\pi}\int_{-\pi}^{\pi}{f(x){\rm{e}}^{-\iota kx}dx},\quad\iota^{2}=-1,~~k\in\mathbb{Z},
	\end{equation}
 we define $T_{M}(f)\in\mathbb{C}^{M\times M}$ the Toeplitz matrix
	\begin{equation}\label{eq:Toeplitz_M}
	    T_{M}(f)=\begin{bmatrix}
		& a_{0} & a_{-1} & a_{-2} & \cdots & \cdots & a_{1-M} &\\
		& a_{1} & a_{0} & a_{-1} & \cdots & \cdots & a_{2-M} &\\
		& \vdots & \ddots & \ddots & \ddots &\ddots & \vdots&\\
		& \vdots & \ddots & \ddots &\ddots &\ddots & \vdots&\\
		& a_{M-2} & \ddots & \ddots & \ddots & \ddots & a_{-1} &\\
		& a_{M-1} & a_{M-2} &\cdots & \cdots & a_{1} & a_{0} &
	\end{bmatrix}
	\end{equation}
generated by $f$. In addition, the Toeplitz sequence $\left\{{T_{M}(f)}\right\}_{M\in \mathbb{N}}$ is called the family of Toeplitz matrices generated by $f$.
 \end{definition}

For instance, the tridiagonal Toeplitz matrix
\[
	   \mathcal{G}^2_{M}=\begin{bmatrix}
		& 2 & -1 & &  & &\\
		& -1 & 2 & -1 &  & &\\
		&  & \ddots & \ddots & \ddots & &\\
		& &  &-1 &2 & -1&\\
		& &  &  & -1 & 2 &
	\end{bmatrix}
\]
associated with a finite difference discretization of the Laplacian operator is generated by
\begin{equation}\label{eq:symbl}
l(x)	= 2-2\cos(x).
\end{equation}

 Note that for Toeplitz matrices $T_{M}(f)$, $M\in \mathbb{N}$, as in \eqref{eq:Toeplitz_M}, to have a generating function associated to the whole Toeplitz sequence, we need that there exists $f\in L^1(-\pi,\pi)$ for which the relationship (\ref{eq:fourier_coef}) holds for every $k\in\mathbb{Z}$.

 In the case where the partial Fourier sum
	\[
	\sum_{k=-M+1}^{M-1}{a_{k}{\rm{e}}^{\iota kx}}
	\]
	converges in infinity norm, $f$ coincides with its sum and is a continuous $2\pi$ periodic function, given the Banach structure of this space equipped with the infinity norm. A sufficient condition is that $\sum_{k=-\infty}^{\infty}\vert{a_{k}}\vert<\infty$, i.e., the generating function belongs to the Wiener class, which is a closed sub-algebra of the continuous $2\pi$ periodic functions.

 Now, according to \eqref{eq:LH_matrix}, we define
	\begin{equation*}
		b_{M,\alpha}\left(x\right)=\sum_{k=0}^{M}{g_{k}^{(\alpha)}{\rm{e}}^{\iota (k-1)x}}.
	\end{equation*}
Hence the partial Fourier sum associated to  $\mathcal{\bf G}_{M}^{\left(\alpha\right)}$ is
 \begin{equation*}
  b_{M,\alpha}\left(x\right)+\overline{	b_{M,\alpha}\left(x\right)}=2
		\sum_{k=0}^{M}{g_{k}^{(\alpha)}\cos\left((k-1)x\right)}.
	\end{equation*}

	Fixed $d=1$, from \eqref{eq:LH_matrix} it is evident that the matrix $h^\alpha A_M^\alpha=c(\alpha)\mathcal{\bf G}_{M}^{\left(\alpha\right)}$ is symmetric, Toeplitz, and is generated by the following real-valued even function on $[-\pi,\pi]$, defined in the following lemma.
	
\begin{lemma}\label{Lemma: symbol}
{\rm \cite{pang2016fast,donatelli2016spectral}}  The generating function of $h^\alpha A_M^\alpha=c(\alpha)\mathcal{\bf G}_{M}^{\left(\alpha\right)}$ is
		\begin{equation}\label{eq:symbol_ftn}
  f_{\alpha}(\,x)\,=-c(\alpha)2^{\alpha+1}\Big(\sin{\frac{x}{2}}\Big)^{\alpha}\cos{\Big(\,x\Big(\,1-\frac{\alpha}{2}\Big)\,+\frac{\alpha\pi}{2}\Big)\,},\quad~~ x\in [\,0,\pi]\,.
		\end{equation}
\end{lemma}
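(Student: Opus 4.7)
The plan is to identify the generating function by summing the formal Fourier series associated with the (symmetric) Toeplitz matrix $\mathcal{G}_M^{(\alpha)}$, using the binomial series expansion of $(1-z)^\alpha$ evaluated on the unit circle. Concretely, as the excerpt already notes, the partial Fourier sum associated with $\mathcal{G}_M^{(\alpha)}$ is
\[
b_{M,\alpha}(x)+\overline{b_{M,\alpha}(x)}=2\sum_{k=0}^{M} g_k^{(\alpha)}\cos\bigl((k-1)x\bigr),
\]
so it suffices to pass to the limit $M\to\infty$ and identify the resulting sum in closed form.

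First, I would justify that the Toeplitz sequence does admit a generating function in the Wiener class. Using the well-known asymptotics $|g_k^{(\alpha)}|\sim C/k^{\alpha+1}$ (which also follow from Lemma~\ref{Lemma1} together with the integral representation of the binomial coefficients), the series $\sum_k|g_k^{(\alpha)}|$ converges for $\alpha\in(1,2)$, so $\sum_{k\ge 0}g_k^{(\alpha)} z^k$ is absolutely (and uniformly) convergent on the closed unit disk. By the standard Newton binomial expansion, together with continuity up to the boundary, one then has
\[
\sum_{k=0}^{\infty} g_k^{(\alpha)} e^{\iota k x}=\bigl(1-e^{\iota x}\bigr)^{\alpha},\qquad x\in[-\pi,\pi],
\]
using the principal branch of the power.

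Next I would put $1-e^{\iota x}$ into polar form. For $x\in(0,\pi]$ one has $1-e^{\iota x}=-2\iota\sin(x/2)\,e^{\iota x/2}=2\sin(x/2)\,e^{\iota(x-\pi)/2}$, the factor $2\sin(x/2)$ being nonnegative. Raising to the power $\alpha$ (principal branch) yields
\[
\bigl(1-e^{\iota x}\bigr)^{\alpha}=\bigl[2\sin(x/2)\bigr]^{\alpha}\,e^{\iota\alpha(x-\pi)/2}.
\]
Multiplying by $e^{-\iota x}$, one reaches the generating series for $\mathcal{G}_M^{(\alpha)}$ shifted by one:
\[
e^{-\iota x}\bigl(1-e^{\iota x}\bigr)^{\alpha}=\bigl[2\sin(x/2)\bigr]^{\alpha}\,e^{\iota\bigl[(\alpha/2-1)x-\alpha\pi/2\bigr]}.
\]

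Finally I would take twice the real part (equivalently, add the complex conjugate obtained from $x\mapsto -x$) to recover $2\sum_{k\ge0}g_k^{(\alpha)}\cos\bigl((k-1)x\bigr)$. By parity of cosine, this equals
\[
2\bigl[2\sin(x/2)\bigr]^{\alpha}\cos\!\Bigl((1-\tfrac{\alpha}{2})x+\tfrac{\alpha\pi}{2}\Bigr)
=2^{\alpha+1}\bigl(\sin(x/2)\bigr)^{\alpha}\cos\!\Bigl(x\bigl(1-\tfrac{\alpha}{2}\bigr)+\tfrac{\alpha\pi}{2}\Bigr).
\]
Multiplying by the scalar in front of $\mathcal{G}_M^{(\alpha)}$ (namely $-c(\alpha)$, which comes from $\bar c=-dc(\alpha)/h^{\alpha}$ with $d=1$ absorbed into $h^{\alpha}A_M^{\alpha}$) delivers exactly the expression~\eqref{eq:symbol_ftn}. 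The main technical point to argue carefully is the legitimacy of passing from the formal partial sum to the closed-form expression: once absolute summability of $\{g_k^{(\alpha)}\}$ is established the limit is uniform, so the limit function is continuous on $[-\pi,\pi]$ and its Fourier coefficients coincide with those defining the Toeplitz sequence, certifying that it is indeed the generating function; the choice of branch and the polar normalization of $1-e^{\iota x}$ are the only delicate bookkeeping steps.
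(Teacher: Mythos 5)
Your proof is correct, and it is essentially the standard derivation used in the references the paper cites for this lemma (\cite{pang2016fast,donatelli2016spectral}); the paper itself states the result without proof, so there is nothing internal to compare against. The key steps --- absolute summability of $\{g_k^{(\alpha)}\}$ placing the symbol in the Wiener class, the identification $\sum_{k\ge 0}g_k^{(\alpha)}e^{\iota kx}=(1-e^{\iota x})^{\alpha}$ via the Newton binomial series with the principal branch, the polar form $1-e^{\iota x}=2\sin(x/2)\,e^{\iota(x-\pi)/2}$, and taking twice the real part of the shifted series --- are all handled correctly, and you even get the sign bookkeeping right: the factor is $-c(\alpha)$ coming from $\overline{c}=-dc(\alpha)/h^{\alpha}$ with $d=1$, which is consistent with the stated formula \eqref{eq:symbol_ftn} (the paper's inline identification $h^{\alpha}A_M^{\alpha}=c(\alpha)\mathcal{\bf G}_{M}^{(\alpha)}$ carries a sign slip that your argument implicitly corrects).
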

\begin{itemize}

        \item For $\alpha=2$,\quad $f_{2}(\,x)\,=l(\,x\,)\,=2-2\cos{x}$.
        \item For $\alpha=1$,\quad $f_{1}(\,x)\,=\frac{1}{2}l(\,x\,)$
    \end{itemize}
As observed in \cite{donatelli2016spectral}, the function $f_{\alpha}(\,x)\,$ has a unique zero of order $\alpha$ at $x=0$ for $1<\alpha\leq 2$.

By using the extremal spectral properties of Toeplitz matrix sequences (see e.g. \cite{Se96,Se98,BGr98}) and by combining it with Lemma \ref{Lemma: symbol}, for $\alpha \in (1,2)$, the obtained spectral information can be summarized in the following items  (see again \cite{donatelli2016spectral}):
\begin{itemize}
\item any eigenvalue $\lambda$ of $h^\alpha A_M^\alpha$ belongs to the open interval $(0,r)$ with $0=\min f_{\alpha}$, $r=\max f_{\alpha}>0$;
\item $\lambda_{\max}(h^\alpha A_M^\alpha)$ is monotonic strictly increasing converging to $r=\max f_{\alpha}$, as $M$ tends to infinity;
\item $\lambda_{\min}(h^\alpha A_M^\alpha)$ is monotonic strictly decreasing converging to $0=\min f_{\alpha}$, as $M$ tends to infinity;
\item  by taking into account that the zero of the generating function $f_{\alpha}$ has exactly order $\alpha\in (1,2]$, it holds that
\[
\lambda_{\min}(h^\alpha A_M^\alpha) \sim \frac{1}{M^\alpha},
\]
where, for nonnegative sequences $\alpha_n, \beta_n$, the writing $\alpha_n\sim \beta_n$ means that there exist real positive constants $0<c\le C< \infty$, independent of $n$ such that
$
c \alpha_n \le \beta_n \le C \alpha_n, \forall n \in {\mathbb N}.
$
\end{itemize}
As a consequence of the third and of the fourth items reported above, the Euclidean conditioning of $h^\alpha A_M^\alpha$ grows exactly as $M^\alpha$.

Therefore, classical stationary methods fail to be optimal, in the sense that we cannot expect convergence within a given precision, with a number of iterations bounded by a constant independent of $M$: for instance, the Gauss-Seidel iteration would need a number of iterations exactly proportional to $M^\alpha$. In addition, even the Krylov methods will suffer from the latter asymptotic ill-conditioning. In reality, looking at the estimates by Axelsson and Lindsk\"og \cite{AxL}, $O(M^{\alpha/2})$ iterations would be needed by a standard CG for reaching a solution within a fixed accuracy. In the present context of a zero of noninteger order, even the preconditioning by using band-Toeplitz matrices, will not ensure optimality as discussed in \cite{donatelli2016spectral} and detailed in Section \ref{sec:band_approximation}. On the other hand, when using circulant or the $\tau$ preconditioning, the spectral equivalence is guaranteed also for a noninteger order of the zero at $x=0$ if the order is bounded by $2$ \cite{lei2013circulant,NMTMA,BEV}. In the latter case, the distance in rank is less with respect to other matrix-algebras as emphasized in \cite{Slinear}. In a multi-dimensional setting, only the $\tau$ preconditioning leads to optimality using the PCG under the same constraint on the order of the zero, while multilevel circulants are not well-suited \cite{capizzano2000any}. Finally, since we consider also problems in imaging we recall that the $\tau$ algebra emerges also in this context when using high precision boundary conditions (BCs) such as the anti-reflective BCs \cite{tau-AR-BCs-1,tau-AR-BCs-2}. 

 \section{Multigrid methods and level independency}\label{sec:multigrid}

 Multigrid methods (MGMs) have already proven to be effective solvers as well as valid preconditioners for Krylov methods when numerically approaching $\mbox{FDEs}$ \cite{moghaderi2017spectral,donatelli2020multigrid,pang2012multigrid}. A multigrid method combines two iterative methods called smoother and coarse grid correction. The two basic procedures when carefully chosen are spectrally complementary and in this sense, multigrid procedures are always of multi-iterative type \cite{CMAME-colloc}.   The coarser matrices can either be obtained by projection (Galerkin approach) or by rediscretization (geometric approach). In the case where only a single coarser level is considered we obtain Two Grids Methods (TGMs), while in the presence of more levels, we have multigrid methods, in particular, V-cycle when only a recursive call is performed and W-cycle when two recursive calls are applied.

 Deepening what has been done in \cite{donatelli2016spectral,moghaderi2017spectral}, in this section, we investigate the convergence of multigrid methods in Galerkin form for the discretized problem~\eqref{eq:Final_Linear_sys}.

To apply the Galerkin multigrid to a given linear system $A_Mx=b$, for $M_0 = M > M_1 > M_2 > \dots > M_\ell > 0$, let us define the grid transfer operators $P_k \in \R^{M_{k+1}\times M_k}$ as
 \begin{equation}\label{eq:Pk}
   P_k = K_kT_{M_k}(p_{k}), \qquad k=0,\dots, \ell-1,
 \end{equation}
where $K_k$ is the down-sampling operator and $p_{k}$ is a properly chosen polynomial. The coarser matrices are then
 \begin{equation}\label{eq:gal}
  A_{M_{k+1}}= P_kA_{M_{k}} P_k^T, \qquad k=0,\dots, \ell-1.
 \end{equation}

If at the recursive level $k$, we simply apply one post-smoothing iteration of a stationary method having iteration matrix $S_k$ the TGM iteration matrix at the level $k$ is given by
\begin{equation*}\label{eq:TGM}
    TGM_k=S_k\left[I_{M_{k}}-P^T_k(P_k{A_{M_{k}}}P^T_k)^{-1}P_k{A_{M_{k}}}\right].
\end{equation*}


The following theorem states the convergence of TGM at a generic recursion level $k$.
\begin{theorem}[Ruge-St\"uben \cite{ruge1987algebraic}]\label{theorem4}

Let ${A_{M_k}}$ be a symmetric positive definite matrix, $D_k$ the diagonal matrix having as main diagonal that of ${A_{M_{k}}}$, and $S_k$ be the post-smoothing iteration matrix. Assume that $\exists\ \sigma_{k}>0$ such that
\begin{equation}\label{eq:Smoothing_pro}
\vert\vert S_k u\vert\vert_{A_{M_{k}}}^{2}\leq\vert\vert u\vert\vert_{A_{M_{k}}}^{2}-\sigma_{k}\vert\vert u\vert\vert_{{A_{M_{k}}}D_{k}^{-1}{A_{M_{k}}}}^{2},\quad\forall~u\in\mathbb{R}^{M_k},
\end{equation}
and $\exists\ \delta_{k}>0$ such that
\begin{equation}\label{eq:App_Pro}
\min_{y\in\mathbb{R}^{M_{k+1}}} \vert\vert u-P_{k} y\vert\vert_{D_{k}}^{2}\leq\delta_{k}\vert\vert u\vert\vert_{A_{M_{k}}}^{2},\quad\forall~u\in\mathbb{R}^{M_k},
\end{equation}
then $\delta_{k}>\sigma_{k}$ and
\begin{equation*}\label{eq:norm_TGM}
    \vert\vert \mbox{TGM}_{k}\vert\vert_{A_{M_{k}}}\leq\sqrt{1-\frac{\sigma_{k}}{\delta_{k}}}.
\end{equation*}
\end{theorem}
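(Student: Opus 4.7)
My plan is to follow the classical Ruge–Stüben argument, in which the smoothing and approximation properties combine into a contraction bound on the coarse-grid-corrected error. Throughout, I abbreviate $A=A_{M_k}$, $P=P_k$, $D=D_k$, $S=S_k$ and introduce the coarse-grid correction matrix $C_k = I_{M_k} - P^T(PAP^T)^{-1}PA$, so that $\mathrm{TGM}_k = SC_k$. First I would record three structural facts about $C_k$ obtained by direct computation: it is idempotent ($C_k^2=C_k$), $A$-self-adjoint ($AC_k = C_k^T A$), and satisfies the annihilation identity $PAC_k = 0$. Together these say that $C_k$ is the $A$-orthogonal projection onto $\ker(PA)$, so in particular $\|C_k u\|_A \leq \|u\|_A$ for every $u$.

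The central, and in my view hardest, step is to convert the approximation property \eqref{eq:App_Pro}, which is stated in the $D$-norm, into the reverse-flavoured inequality
\begin{equation*}
\|C_k u\|_{A}^{2} \leq \delta_k\, \|C_k u\|_{AD^{-1}A}^{2}, \qquad \forall\, u\in\mathbb{R}^{M_k}.
\end{equation*}
To prove it I would set $v=C_k u$ and use $PAv = 0$ to write, for any $y\in\mathbb{R}^{M_{k+1}}$,
\begin{equation*}
\|v\|_{A}^{2} = \langle Av,v\rangle = \langle Av,\, v - P^T y\rangle \leq \|Av\|_{D^{-1}}\, \|v-P^T y\|_{D},
\end{equation*}
where the last step is Cauchy–Schwarz in the $D$-weighted inner product. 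Selecting $y$ as the minimiser in \eqref{eq:App_Pro} applied to $v$, and recognising $\|Av\|_{D^{-1}} = \|v\|_{AD^{-1}A}$, yields $\|v\|_{A}^{2} \leq \sqrt{\delta_k}\,\|v\|_{AD^{-1}A}\|v\|_A$, and the claim follows after cancelling $\|v\|_A$.

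Once this converse bound is in hand, the convergence estimate follows by a direct combination. Applying the smoothing property \eqref{eq:Smoothing_pro} to $C_k u$,
\begin{equation*}
\|\mathrm{TGM}_k u\|_{A}^{2} = \|SC_k u\|_{A}^{2} \leq \|C_k u\|_{A}^{2} - \sigma_k\, \|C_k u\|_{AD^{-1}A}^{2},
\end{equation*}
then inserting $\|C_k u\|_{AD^{-1}A}^{2}\geq \delta_k^{-1}\|C_k u\|_{A}^{2}$ and using $\|C_k u\|_{A}\leq \|u\|_{A}$ gives $\|\mathrm{TGM}_k u\|_{A}^{2}\leq (1 - \sigma_k/\delta_k)\|u\|_{A}^{2}$. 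Taking the supremum over $u\neq 0$ and a square root yields the stated bound on $\|\mathrm{TGM}_k\|_{A}$. Finally, $\delta_k\geq \sigma_k$ is extracted as a by-product by plugging any nonzero $v\in\ker(PA)$ into both hypotheses: from \eqref{eq:Smoothing_pro} one has $\sigma_k\|v\|_{AD^{-1}A}^{2}\leq \|v\|_A^2$, and comparing with the converse bound just derived gives $\sigma_k\leq \delta_k$, with strict inequality as soon as $S$ is a nontrivial smoother on the coarse-complement.
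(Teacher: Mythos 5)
Your argument is correct, but note that the paper does not prove Theorem~\ref{theorem4} at all: it is quoted verbatim from Ruge--St\"uben and supported only by the citation \cite{ruge1987algebraic}, so what you have written is a reconstruction of the classical proof rather than an alternative to anything in the text. The reconstruction is sound: the three structural facts about $C_k$ (idempotence, $A$-self-adjointness, $P A C_k=0$), the Cauchy--Schwarz step in the $D$-weighted inner product that converts \eqref{eq:App_Pro} into $\|C_k u\|_{A}^{2}\leq \delta_k\|C_k u\|_{A D^{-1} A}^{2}$, and the final combination with \eqref{eq:Smoothing_pro} and $\|C_k u\|_A\leq\|u\|_A$ are exactly the standard chain, and you correctly read the dimensionally inconsistent $P_k y$ in \eqref{eq:App_Pro} as $P_k^T y$ (since $P_k\in\R^{M_{k+1}\times M_k}$, only $P_k^T y$ makes sense for $y\in\R^{M_{k+1}}$). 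The one point worth flagging is the strict inequality $\delta_k>\sigma_k$: your derivation from $0\leq\|S_k v\|_A^2$ for $v\in\ker(P_kA_{M_k})$ only yields $\sigma_k\leq\delta_k$, and strictness genuinely requires an extra hypothesis (e.g.\ $S_k$ injective on that subspace); your hedge is honest, and in fact the original Ruge--St\"uben statement asserts only the non-strict inequality, so the theorem as transcribed in the paper is slightly stronger than what the stated hypotheses deliver.
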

The inequalities in equations \eqref{eq:Smoothing_pro} and \eqref{eq:App_Pro} are well known as the \emph{smoothing property} and \emph{approximation property}, respectively. To prove the multigrid convergence (recursive application of TGM) it is enough to prove that the assumptions of Theorem \ref{theorem4} are satisfied for each $k=0,1,\dots,\ell-1$.
On the other hand, to guarantee a W-cycle convergence in a number of iterations independent of the size $M$ \cite{Trot}, we have to prove that the following \emph{level independency} condition
\begin{equation}\label{eq:levind}
    \liminf_{k\to \infty} \, \frac{\sigma_k}{\delta_k} \, = \, \beta > 0
\end{equation}
holds with $\beta$ independent of $k$.

Concerning (multilevel) Toeplitz matrices, multigrid methods have been extensively investigated in the literature \cite{FS2,chan1998multigrid,arico2007}.
While the smoothing property can be easily carried on for a simple smoother like weighted Jacobi, the approximation property usually requires a detailed analysis based on a generalization of the local Fourier analysis, see \cite{D10}.

If we now apply the multigrid with Galerkin approach to the solution of \eqref{eq:Final_Linear_sys} and refer to the matrices at the coarser level $k$ as $A^\alpha_{M_k}$, they are still Toeplitz and, based on the theoretical analysis developed in \cite{FS1,ADS} for $\tau$ matrices, are associated to the following symbols

  \begin{equation}\label{eq:f_at_level}
     f_{k+1,\alpha}(\,x)\,=\frac{1}{2}\Big[\,f_{k,\alpha}\Big(\,\frac{x}{2}\Big)\,p_{k,\alpha}^{2}\Big(\,\frac{x}{2}\Big)\,+f_{k,\alpha}\Big(\,\pi-\frac{x}{2}\Big)\,p_{k,\alpha}^{2}\Big(\,\pi-\frac{x}{2}\Big)\,\Big]\,,\qquad k=0,\dots, \ell-1.
  \end{equation}

Since the function $f_{0,\alpha}=f_\alpha$ has only a zero at the origin of order smaller or equal to 2, according to the convergence analysis of multigrid methods in \cite{FS1,ADS},
we define the symbols of the projectors $P_k$ in \eqref{eq:Pk} as
\begin{equation}\label{eq:symbpk}
  p_k(x)=p_{k,\alpha}(x), \qquad \mbox{ where }  \qquad p_{k,\alpha}(x)=C_{k+1,\alpha}p(x)=C_{k+1,\alpha}(1+\cos{x})
\end{equation}
and $C_{k+1,\alpha}$ is a constant chosen such that $f_{k+1,\alpha}(\,\pi)\,=f_{k,\alpha}(\,\pi)$.


We observe that the symbols $f_{k,\alpha}$ at each level are nonnegative and they vanish only at the origin with order $\alpha$ thanks to \cite[Proposition 2.3]{SerraNM}. Therefore, the symbols $p_{k,\alpha}$  in \eqref{eq:symbpk} and $f_{k+1,\alpha}$
in \eqref{eq:f_at_level} satisfy the approximation property \eqref{eq:App_Pro} because
\begin{equation}\label{eq:condTGMk}
    \lim_{x\to 0} \frac{p_{k,\alpha}(x+\pi)^2}{f_{k+1,\alpha}(x)}=\gamma_k<\infty
\end{equation}
at each level $k=0,\dots, \ell-1$, see \cite[Proposition 3.1]{SerraNM}.

\begin{remark}\label{rem:gamma}
Since $\delta_k$ is proportional to $\gamma_k$, the level independency condition \eqref{eq:levind} requires that
\begin{equation}\label{eq:levindToep}
    \limsup_{k\to \infty} \gamma_k=\gamma <\infty
\end{equation}
with $\gamma$ independent of $k$.
\end{remark}

For the smoothing property \eqref{eq:Smoothing_pro}, in order to ensure $\sigma_k>\sigma>0$ for all $k$, will be crucial the choice of
$C_{k+1,\alpha}$ as a constant such that $f_{k+1,\alpha}(\,\pi)\,=f_{k,\alpha}(\,\pi)$.

In conclusion, to prove the level independency condition \eqref{eq:levind}, we need a detailed analysis of the symbol
$f_{k+1,\alpha}$, which is performed in the next subsection.

 \subsection{Behaviour of symbols $f_{k,\alpha}$}\label{ssec:symbol recursion}

The analysis of $f_{k+1,\alpha}$
in \eqref{eq:f_at_level} requires a detailed study of the constant $C_{k+1,\alpha}$ in
$p_{k,\alpha}$ chosen imposing the condition
\[f_{k+1,\alpha}(\,\pi)\,=f_{k,\alpha}(\,\pi)=c(\alpha)2^{\alpha+1}, \qquad k=0,1,\dots,\ell-1,\]
because $f_{0,\alpha}(\pi)=c(\alpha)2^{\alpha+1}$.

According to the classical analysis,
in the case of discrete Laplacian, i.e., $l(x)=2-2\cos{x}$, the value of $C_{k+1,2}$ at each level is $\sqrt{2}$.

\begin{proposition}\label{proposition_f_k(x)=l(x)}
Let $f_{0,2}(x)\,=l(x)$, where $l(x)=2-2\cos{x}$, and $p_{k,2}(x)\,=C_{k+1,2}\,p(x)$~with $C_{k+1,2}\,=\sqrt{2}$. Then,
for $f_{k+1,2}$ computed according to \eqref{eq:f_at_level}, it holds
\begin{equation}\label{eq:f_k(x)=l(x)}
    f_{k+1,2}(x)\,=l(x), \qquad k=0,1,\dots,\ell-1.
\end{equation}
\end{proposition}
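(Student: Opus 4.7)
The plan is to argue by induction on the recursion level $k$. The base case $k=-1$ (reading \eqref{eq:f_k(x)=l(x)} as $f_{0,2}=l$) is the hypothesis of the proposition. For the inductive step, I would assume $f_{k,2}(x)=l(x)$ and substitute into the recursion \eqref{eq:f_at_level} together with $p_{k,2}(x)^{2}=2\,p(x)^{2}=2(1+\cos x)^{2}$ coming from $C_{k+1,2}=\sqrt{2}$. This reduces the claim to the trigonometric identity
\begin{equation*}
l\!\left(\tfrac{x}{2}\right)\,p\!\left(\tfrac{x}{2}\right)^{\!2}+\,l\!\left(\pi-\tfrac{x}{2}\right)\,p\!\left(\pi-\tfrac{x}{2}\right)^{\!2}=l(x),
\end{equation*}
so that the problem becomes purely analytic and no matrix/spectral theory is needed.

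The cleanest way to verify this identity is to convert everything to half-angle form. Using $l(y)=4\sin^{2}(y/2)$ and $p(y)=2\cos^{2}(y/2)$, the first summand becomes $16\sin^{2}(x/4)\cos^{4}(x/4)$. For the second summand I would apply the complementary-angle identities $\sin(\pi/2-\theta)=\cos\theta$ and $\cos(\pi/2-\theta)=\sin\theta$ with $\theta=x/4$, giving $l(\pi-x/2)=4\cos^{2}(x/4)$ and $p(\pi-x/2)=2\sin^{2}(x/4)$, hence a contribution of $16\sin^{4}(x/4)\cos^{2}(x/4)$.

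Factoring $16\sin^{2}(x/4)\cos^{2}(x/4)$ out of the sum and using the Pythagorean identity $\sin^{2}+\cos^{2}=1$ collapses everything to $16\sin^{2}(x/4)\cos^{2}(x/4)=4\sin^{2}(x/2)=2-2\cos x=l(x)$, which closes the induction. I expect no real obstacle: the only delicate point is keeping track of the factor $1/2$ in \eqref{eq:f_at_level} against the factor $C_{k+1,2}^{2}=2$ in $p_{k,2}^{2}$, and checking that the choice $C_{k+1,2}=\sqrt{2}$ is indeed forced by the normalization $f_{k+1,2}(\pi)=f_{k,2}(\pi)=4$ (evaluating the recursion at $x=\pi$ gives $f_{k+1,2}(\pi)=\tfrac12 l(\pi/2)\,p(\pi/2)^{2}C_{k+1,2}^{2}+0=2C_{k+1,2}^{2}$, so consistency enforces $C_{k+1,2}^{2}=2$). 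With this bookkeeping in place the identity above is immediate and the induction is complete.
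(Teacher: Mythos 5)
Your argument is correct and follows essentially the same route as the paper's proof: the factor $C_{k+1,2}^{2}=2$ cancels the $1/2$ in \eqref{eq:f_at_level}, reducing the step to the identity $l\big(\tfrac{x}{2}\big)p^{2}\big(\tfrac{x}{2}\big)+l\big(\pi-\tfrac{x}{2}\big)p^{2}\big(\pi-\tfrac{x}{2}\big)=l(x)$, which the paper merely asserts and you verify explicitly with half-angle formulas before closing the induction exactly as the paper does. The only blemish is in your parenthetical normalization check: at $x=\pi$ both arguments of the recursion equal $\pi/2$, so the second term does not vanish; the bracket is $2\cdot\tfrac12\,l(\pi/2)\,p(\pi/2)^{2}C_{k+1,2}^{2}=2C_{k+1,2}^{2}$, which still forces $C_{k+1,2}^{2}=2$ as you conclude.
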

\begin{proof}
Since $p_{k,2}(x)=\sqrt{2}\,p(x)=\sqrt{2}\,(1+\cos{x})$,
using relation \eqref{eq:f_at_level},
for $k=0$ it holds
\[
  f_{1,2}(x)\,=l\Big(\,\frac{x}{2}\,\Big)\,p^{2}\Big(\,\frac{x}{2}\,\Big)\,+l\Big(\,\pi-\frac{x}{2}\,\Big)\,p^{2}\Big(\,\pi-\frac{x}{2}\,\Big)\,=l(\,x)\,.
\]
Since the symbol $p_{k,2}$ of the projector is the same at each level, by recursive relation we obtain the required result
$$f_{\ell,2}(x)\,=\ldots=\,f_{1,2}(x)\,=f_{0,2}(x)=l(x).$$
\end{proof}

For $\alpha\in(1,\,2]$, the value of $C_{k+1,\alpha}$, computed such that $f_{k+1,\alpha}(\,\pi)\,=f_{k,\alpha}(\,\pi)$, is
\begin{equation}\label{eq:C_values}
    C^{2}_{k+1,\alpha}=\frac{f_{k+1,\alpha}(\,\pi)\,}{L_{k+1,\alpha}(\,\pi)\,}=\frac{2^{\alpha+1}}{L_{k+1,\alpha}(\,\pi)\,},\qquad k=0,1,2,\dots \ell-1,
\end{equation}
where
\[L_{k+1,\alpha}(\,x)\,=\frac{1}{2}\Big[\,f_{k,\alpha}\Big(\,\frac{x}{2}\Big)\,p^{2}\Big(\,\frac{x}{2}\Big)\,+f_{k,\alpha}\Big(\,\pi-\frac{x}{2}\Big)\,p^{2}\Big(\,\pi-\frac{x}{2}\Big)\,\Big]\,.\]

Since
$L_{k+1,\alpha}(\,\pi)\,=\,f_{k,\alpha}\left(\,\frac{\pi}{2}\right)\,$ converges to 4 as $k$ diverges, then we have
\begin{equation}\label{eq:limck}
\lim_{k\to\infty} C_{k,\alpha} = 2^\frac{\alpha-1}{2}, \qquad \text{ for } \alpha\in(1,\,2],
\end{equation}
as can be seen in \figurename~\ref{fig:Ck}.
Moreover, the convergence of $C_{k,\alpha}$ is quite fast, especially for $\alpha$ close to 2.

\begin{figure}
		\centering
			\includegraphics[width=0.6\textwidth]{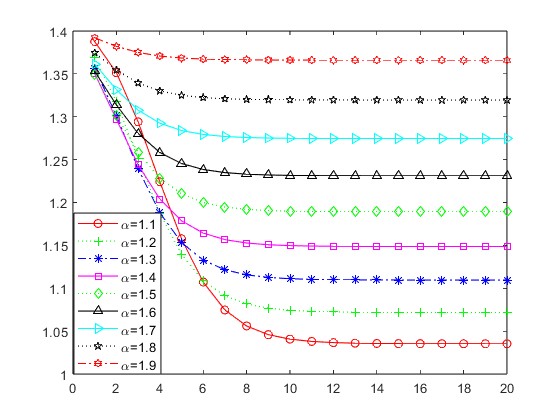}
		\caption{Plot of $C_{k,\alpha}$ vs the level $k$, for different values of $\alpha$.}
	\label{fig:Ck}
\end{figure}

We observe that the sequence of the coarser symbols satisfies
\[
\frac{ f_{\ell,\alpha}(x)}{c(\alpha)}\,\geq \frac{f_{\ell-1,\alpha}(x)}{c(\alpha)} \geq \dots \geq \frac{f_{0,\alpha}(x)}{c(\alpha)} \geq l(x),
\]

for any $\alpha \in (1,2)$, as can be seen in \figurename~\ref{fig:fk}.
Combining this fact with
\[
\|f_{k,\alpha}\|_\infty = f_{k,\alpha}(\pi)
= f_{0,\alpha}(\pi)=c(\alpha)2^{\alpha+1}, \qquad k=1\dots,\ell
\]
for any $\alpha \in (1,2)$, we obtain that the ill-conditioning of the coarser linear systems decreases moving on coarser grids.

\begin{figure}
		\centering			
  \includegraphics[width=0.6\textwidth]{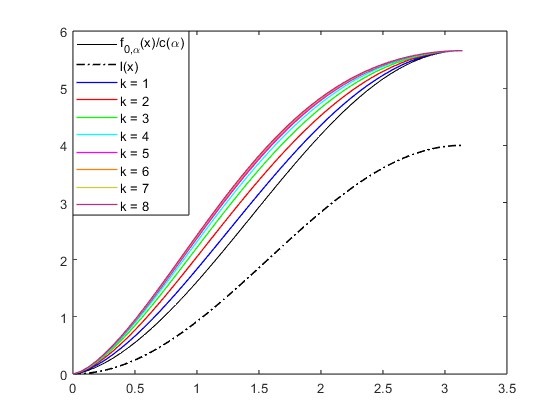}
		\caption{Plots of $f_{k+1,\alpha}/c(\alpha)$ for $\alpha=1.5$ and $k=0,\dots,8$, with $C_{k,\alpha}$ computed according to \eqref{eq:C_values}.}
	\label{fig:fk}
\end{figure}

\subsection{Smoothing Property}\label{subsec:smooth_property}
If weighted Jacobi is used as smoother, then the smoothing property \eqref{eq:Smoothing_pro} is satisfied whenever the smoother converges \cite{chan1998multigrid,ruge1987algebraic}. Since the matrix
$A^{\alpha}_{M_k}$ is symmetric positive definite, the weighted Jacobi method
$$S_k=I_{M_k}-\omega_k D_k^{-1}A^{\alpha}_{M_k}$$
is well-defined, with $D_k$ being the diagonal of ${A^{\alpha}_{M_k}}$. Moreover, it convergences for $ 0<\omega_k<{2}/{\rho(D_k^{-1}{A^{\alpha}_{M_k}})}$,  where $\rho(D_k^{-1}{A^{\alpha}_{M_k}})$ is the spectral radius of $D_k^{-1}{A^{\alpha}_{M_k}}$.

When  ${A^{\alpha}_{M_k}}=T_{M_k}(f_{0,\alpha})$, as for the geometric approach, then $D_k={a}^{(0)}_{0}I_{M_k}$, where ${a}^{(0)}_{0}$ is the Fourier coefficient of order zero of $f_{0,\alpha}$, and thus it holds
\begin{equation}\label{eq:Sk}
S_k=I_{M_k}-\frac{\omega_k}{2\alpha}A^{\alpha}_{M_k},
\end{equation}
since ${a}^{(0)}_{0}=2c(\alpha)\alpha$.

\begin{lemma}\label{Lemma2_Smoothing}
Let $A^\alpha_{M_k}=T_{M_k}(f_{0,\alpha})$ with $f_{0,\alpha}=f_\alpha$ defined as in \eqref{eq:symbol_ftn} and let $S_{M_k}$ defined as in \eqref{eq:Sk}. If we choose
\begin{equation}\label{eq:omk}
  0<\omega_k<\frac{\alpha}{2^{\alpha-1}},
\end{equation}
 then there exist $\sigma_{k}$ such that the smoothing property in \eqref{eq:Smoothing_pro} holds true.

Moreover, choosing
\begin{equation}\label{eq:omegak}
\omega_k=\frac{2^{2-\alpha}\alpha}{3},
\end{equation}
it holds
\begin{equation}\label{sigmak}
 \sigma_k \geq \sigma = \frac{2^{-\alpha}}{9}>0.
\end{equation}
\end{lemma}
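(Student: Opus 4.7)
The plan is to reduce the smoothing inequality to a scalar eigenvalue condition on $A^\alpha_{M_k}$, exploiting that $D_k = a^{(0)}_0 I_{M_k} = 2c(\alpha)\alpha\, I_{M_k}$ is a multiple of the identity. First I would expand $S_k^T A^\alpha_{M_k} S_k - A^\alpha_{M_k}$ starting from $S_k = I_{M_k} - \omega_k D_k^{-1} A^\alpha_{M_k}$; since $D_k^{-1}$ is a scalar multiple of the identity and commutes with $A^\alpha_{M_k}$, the smoothing property \eqref{eq:Smoothing_pro} collapses, after setting $v = A^\alpha_{M_k}u$ and $d_0 = 2c(\alpha)\alpha$, to
\[
\omega_k^{2}\, d_0^{-1}\, v^T A^\alpha_{M_k} v \,\leq\, (2\omega_k - \sigma_k)\, v^T v \qquad \forall\, v\in\R^{M_k}.
\]
This is nothing but the pointwise eigenvalue condition $\sigma_k \leq 2\omega_k - \omega_k^{2}\, d_0^{-1}\, \lambda_{\max}(A^\alpha_{M_k})$.

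Next I would bound $\lambda_{\max}(A^\alpha_{M_k})$ uniformly in $M_k$ via the classical Toeplitz estimate $\lambda_{\max}(T_{M_k}(f))\le \|f\|_\infty$ for any nonnegative generating function $f$, already recalled in Section~\ref{sec:setting}. Combined with $\|f_{0,\alpha}\|_\infty = f_{0,\alpha}(\pi) = c(\alpha)\,2^{\alpha+1}$, this gives $d_0^{-1}\lambda_{\max}(A^\alpha_{M_k}) \leq 2^\alpha/\alpha$, so it suffices to choose
\[
0 \,<\, \sigma_k \,\leq\, 2\omega_k - \omega_k^{2}\, \frac{2^\alpha}{\alpha}.
\]
The right-hand side is strictly positive precisely when $0 < \omega_k < \alpha/2^{\alpha-1}$, which establishes the first claim \eqref{eq:omk}.

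For the explicit choice $\omega_k = 2^{2-\alpha}\alpha/3 = 4\alpha/(3\cdot 2^\alpha)$, a direct computation gives $2\omega_k = 8\alpha/(3\cdot 2^\alpha)$ and $\omega_k^{2}\cdot 2^\alpha/\alpha = 16\alpha/(9\cdot 2^\alpha)$, hence
\[
\sigma_k \,\geq\, \frac{8\alpha}{3\cdot 2^\alpha} - \frac{16\alpha}{9\cdot 2^\alpha} \,=\, \frac{8\alpha}{9\cdot 2^\alpha},
\]
and since $\alpha>1$ the right-hand side majorizes $8/(9\cdot 2^\alpha)\geq 2^{-\alpha}/9$, yielding \eqref{sigmak}. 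The argument rests entirely on two ingredients that are already available in the paper, namely the $D_k = d_0 I$ identification and the Toeplitz bound $\lambda_{\max}\le\|f\|_\infty$, so I do not anticipate any conceptual obstacle; the only care needed is in tracking the constants to match exactly the value $2^{-\alpha}/9$.
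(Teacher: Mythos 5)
Your proof is correct and follows essentially the same route as the paper: the paper simply invokes Lemma 4.2 of the cited reference \cite{moghaderi2017spectral}, whose content is exactly the scalar reduction you carry out explicitly (using $D_k=2c(\alpha)\alpha I_{M_k}$ and $\lambda_{\max}(T_{M_k}(f_{0,\alpha}))\le\|f_{0,\alpha}\|_\infty=c(\alpha)2^{\alpha+1}$ to arrive at $\sigma_k\le 2\omega_k-\omega_k^2\,2^{\alpha}/\alpha$). Your constants match the paper's, including the threshold $\alpha/2^{\alpha-1}$ and the final bound $\tfrac{8\alpha}{9\cdot 2^{\alpha}}\ge \tfrac{2^{-\alpha}}{9}$, so the only difference is that your argument is self-contained where the paper's is by citation.
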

\begin{proof}
    Since $f_{0,\alpha}$ is nonnegative, from Lemma 4.2 in \cite{moghaderi2017spectral} it follows
  $$ 0<\omega_k<\frac{2{a}^{(0)}_{0}}{\|f_{0,\alpha}\|_{\infty}}=\frac{\alpha}{2^{\alpha-1}},
  $$
since $\|f_{0,\alpha}\|_\infty=f_{0,\alpha}(\pi)=c(\alpha){2^{\alpha+1}}$.
  Moreover, for $\omega_k$ chosen as in \eqref{eq:omegak}, the following condition
$$\left(I_{M_k}-\frac{\omega_k}{2\alpha}A^{\alpha}_{M_k}\right)^2 \leq
I_{M_k}-\frac{\sigma_k}{2\alpha}A^{\alpha}_{M_k}$$
 in the proof of Lemma 4.2 in \cite{moghaderi2017spectral} is satisfied for all $\sigma_k \geq \sigma$ in \eqref{sigmak}.
\end{proof}

From the previous lemma, following the same idea as given in \cite{tempered}, we propose to use the following Jacobi parameter
\begin{equation}\label{eq:omega}
\omega_k=\omega^\star=\frac{2^{2-\alpha}\alpha}{3}, \qquad \forall \, k\geq0,
\end{equation}
which provides a good convergence rate as confirmed in the numerical results in Section~\ref{sec:num}.

\subsection{Approximation property and level independency}
According to Remark \ref{rem:gamma}, the uniform boundness of $\delta_k$ is ensured by proving equation \eqref{eq:levindToep}.

First of all, we prove that
for $k\in\mathbb{N}_{0}$, the sequence of functions $g_{k,\alpha}(x)=\frac{p_{k,\alpha}^{2}(\pi-x)}{f_{k,\alpha}(x)}$ is monotonic increasing in $x$.
Recalling the expression of $p_{k,\alpha}$ in \eqref{eq:symbpk}, differentiating $g_{k,\alpha}$ we have
\begin{align*}
    \frac{d}{dx}{g_{k,\alpha}(x)}=\frac{{C_{k+1,\alpha}^{2}}(1-\cos{x})}{f_{k,\alpha}^{2}(x)}\big( 2\sin{x}{f_{k,\alpha}(x)}-(1-\cos{x})\frac{d}{dx}{f_{k,\alpha}(x)}\big),
\end{align*}
and hence it is equivalent to proving that
$$\mathbf{g}_{k,\alpha}(x)=2\sin{x}{f_{k,\alpha}(x)}-(1-\cos{x})\frac{d}{dx}{f_{k,\alpha}(x)}$$
is nonnegative for all $k\in \mathbb{N}_{0}$. This result can be proved by induction on $k$.

For $k=0$, replacing the expression of $f_{0,\alpha}$ (see equation \eqref{eq:symbol_ftn}) in
$\mathbf{g}_{k,\alpha}$, by direct computations we obtain $\mathbf{g}_{k,\alpha}(x) \geq 0$,
$\forall \, x \in [0, \pi]$.

Assuming that for $k=n$ it holds $\mathbf{g}_{n,\alpha}(x) \geq 0$,
$\forall \, x \in [0, \pi]$, we have to prove that the same is true for $k=n+1$.

Applying the recurrence \eqref{eq:f_at_level} to $f_{n+1,\alpha}$, we have
\[
\mathbf{g}_{n+1,\alpha}(x) \,\geq \,
p^{2}\left(\frac{x}{2}\right) S_1(x) - p^{2}\left(\pi-\frac{x}{2}\right) S_2(x),
\]
where $p(x) = 1+\cos(x)$, as defined in \eqref{eq:symbpk}, and
\begin{align*}
S_{1}(x)&=\sin{\Big(\,\frac{x}{2}\Big)}{f_{n,\alpha}\Big(\,\frac{x}{2}\Big)}-\frac{1}{2}\Big(1-\cos{\Big(\,\frac{x}{2}\Big)}\Big)\frac{d}{dx}{f_{n,\alpha}\Big(\,\frac{x}{2}\Big)},\\
S_{2}(x)&=\sin{\Big(\frac{x}{2}\Big)}{f_{n,\alpha}\Big(\pi-\frac{x}{2}\Big)}+\frac{1}{2}\Big(1+\cos{\Big(\,\frac{x}{2}\Big)}\Big)\frac{d}{dx}{f_{n,\alpha}\Big(\,\pi-\frac{x}{2}\Big)}.
\end{align*}
Thanks to the inductive assumption and direct computation, we obtain the desired result $\mathbf{g}_{n+1,\alpha}(x) \geq 0$, $\forall \, x \in [0, \pi]$, i.e.,
\begin{align*}
    \frac{d}{dx}{g_{k,\alpha}(x)}\geq0,\quad \forall k\in \mathbb{N}_{0}.
\end{align*}

Figure \ref{fig: Plots of g_k_approx_property} depicts the function $\mathbf{g}_{k,\alpha}$
for different values of $k$ and $\alpha$.

 \begin{figure}
		\centering
		\begin{subfloat}[$\alpha=1.2$]
			{\resizebox*{4.7cm}{!}{\includegraphics[width=\textwidth]{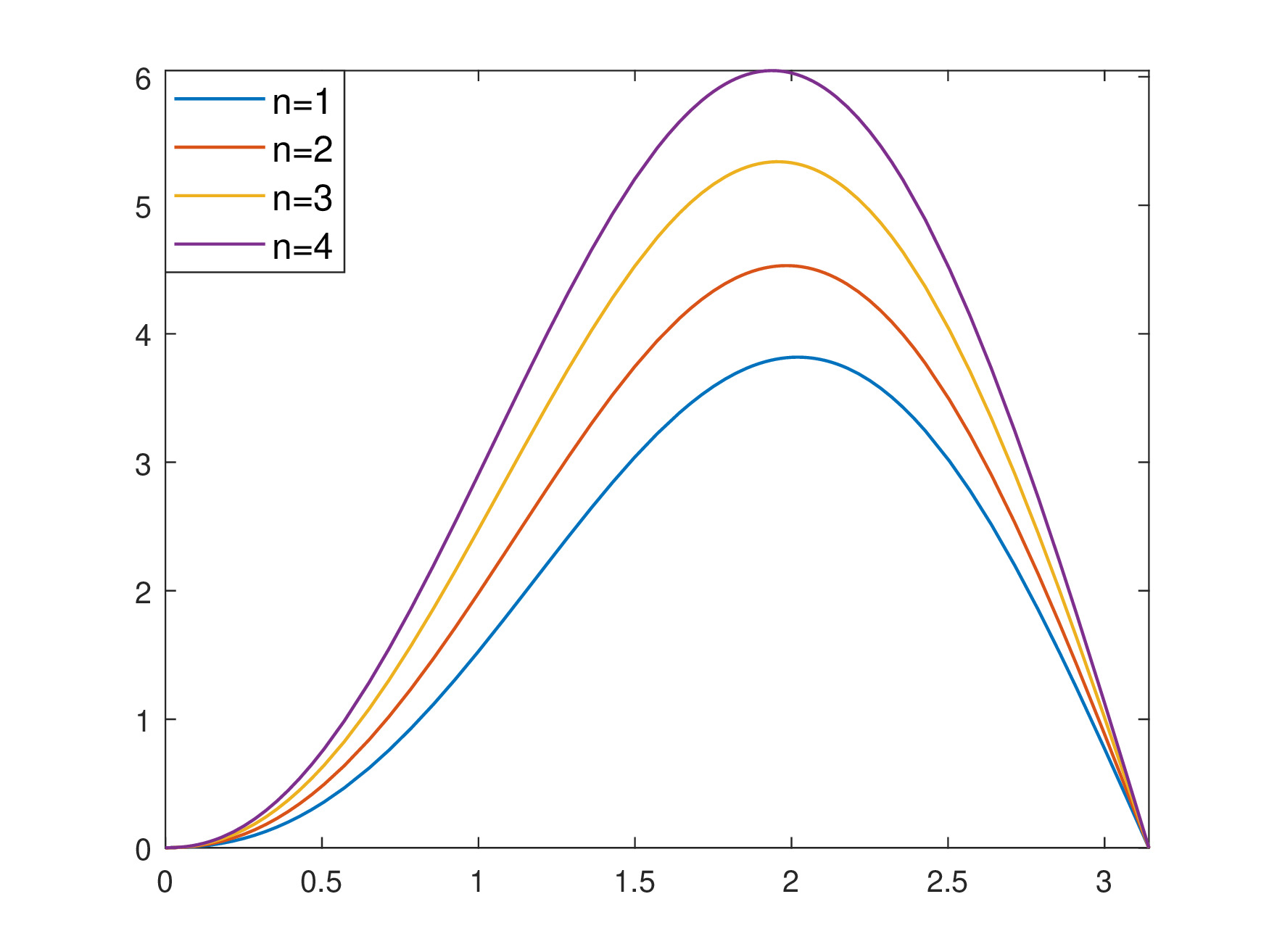}}}
		\end{subfloat}
		\begin{subfloat}[$\alpha=1.5$]
			{\resizebox*{4.7cm}{!}{\includegraphics[width=\textwidth]{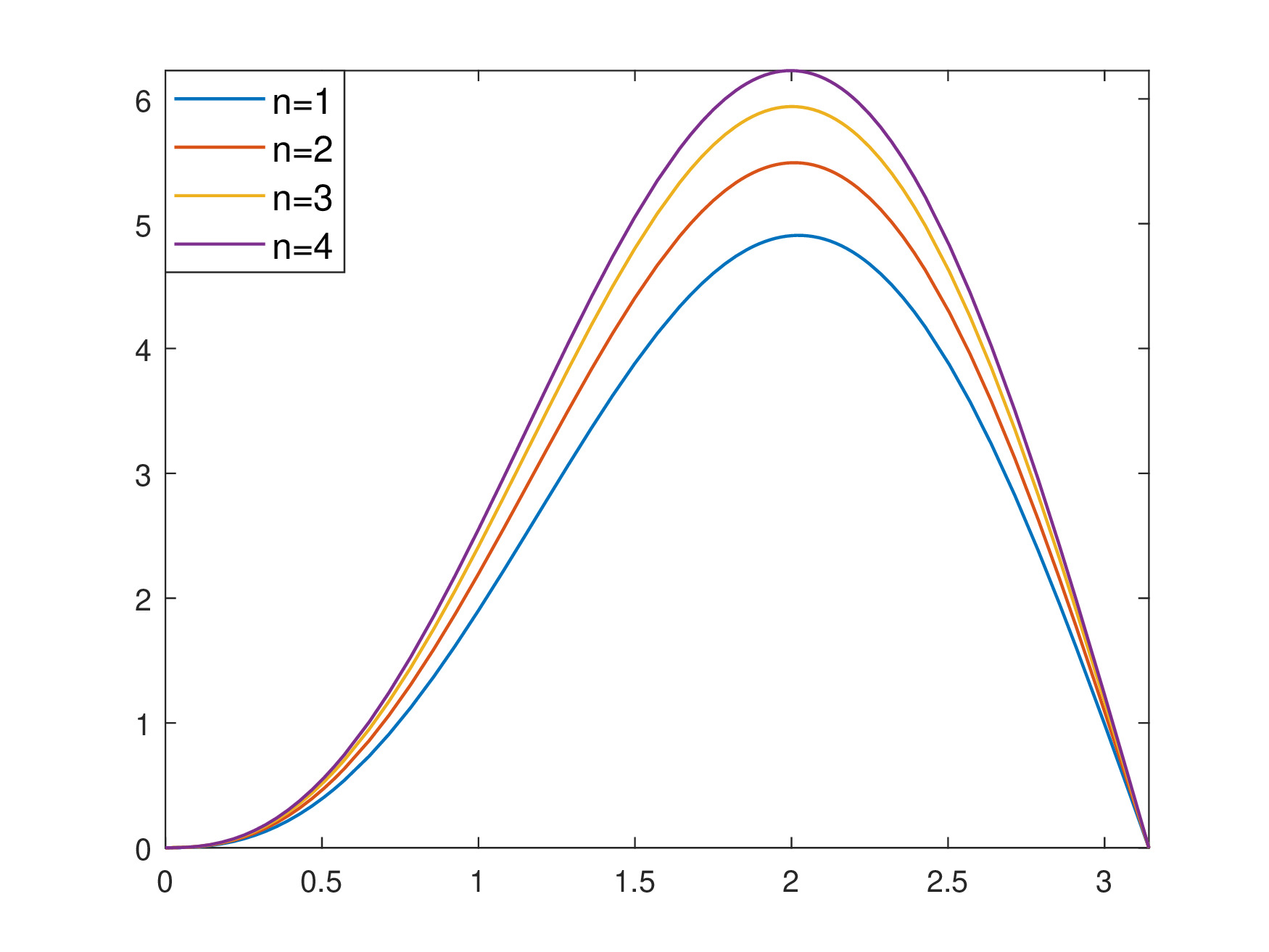}}}
		\end{subfloat}
		\begin{subfloat}[$\alpha=1.8$]
			{\resizebox*{4.7cm}{!}{\includegraphics[width=\textwidth]{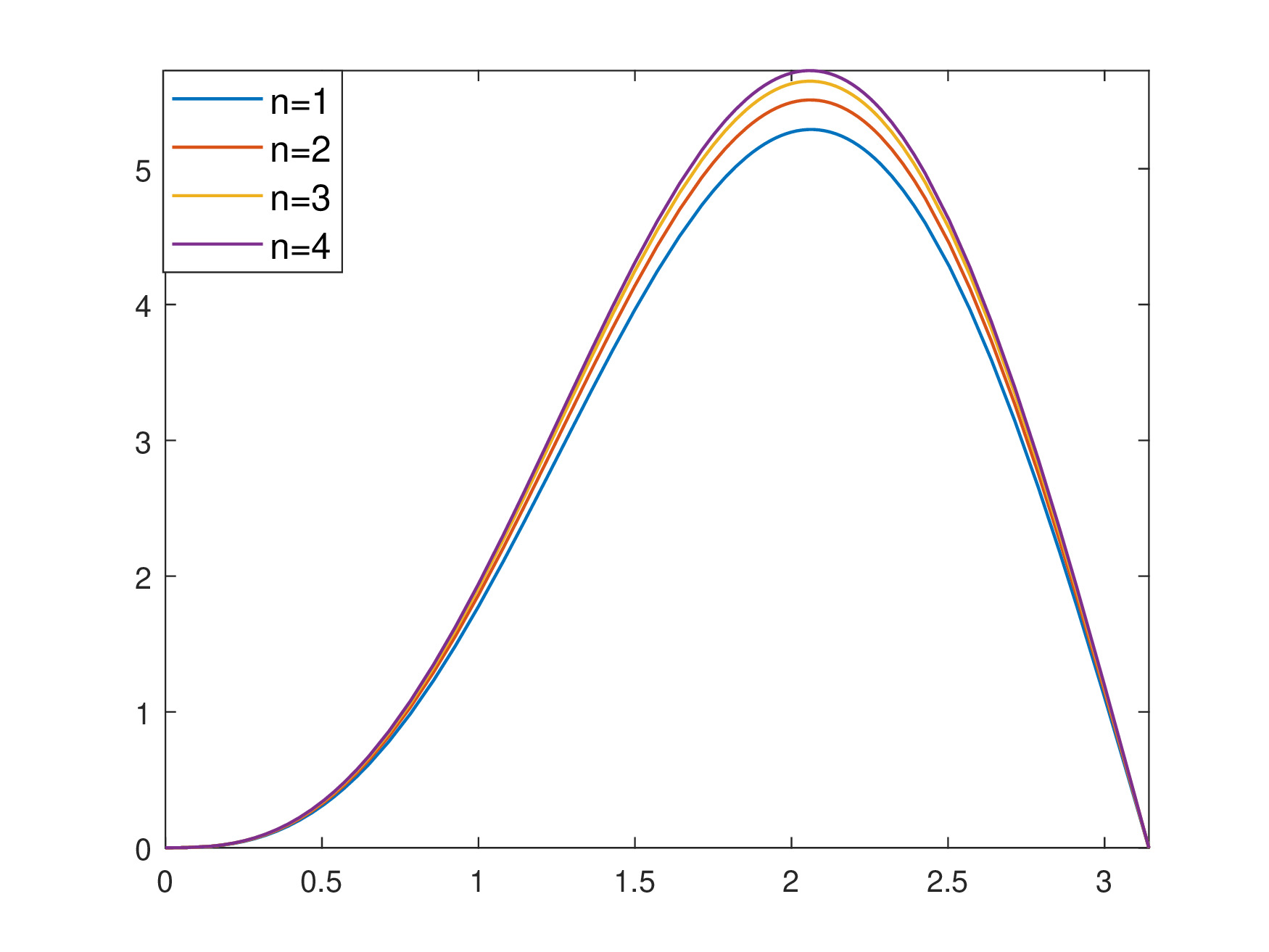}}}
		\end{subfloat}
		\caption{Plot of  $\mathbf{g}_{n,\alpha}(x)$ for different values of $\alpha$.}
	\label{fig: Plots of g_k_approx_property}
 \end{figure}

In conclusion, by the monotonicity of $g_{k,\alpha}(x)$, for all $k\geq 0$,  we have
\[\smash{\displaystyle\max_{x \in[0,\pi]}} g_{k,\alpha}(x)=g_{k,\alpha}(\pi)=\frac{p_{k,\alpha}^{2}(0)}{f_{k,\alpha}(\pi)} = \frac{4C^{2}_{k,\alpha}}{c(\alpha)2^{\alpha+1}},\]
and since the sequence $C_{k,\alpha}$ is bounded thanks to equation \eqref{eq:limck} (see also Figure \ref{fig:Ck}), we have that $\{\gamma_k\}_k$, and hence $\{\delta_k\}_k$, is bounded.

Combining the previous result on $\delta_k$ with equation \eqref{sigmak} for the smoothing analysis, it holds \eqref{eq:levind}, i.e., the level independency is satisfied.


\section{Band Approximation}\label{sec:band_approximation}


Multigrid methods are often applied as preconditioners for Krylov methods, and often to an approximation of $A^{\alpha}_{M}$ instead of at the original coefficient matrix. On the same line as what has been done in \cite{donatelli2020multigrid}, in this section, we discuss a band approximation of $A^{\alpha}_{M}$ to combine with the multigrid method discussed in the previous section. The advantage of using a band matrix is in the possibility of applying the Galerkin approach with a linear cost in $M$ instead of $O(M\log M)$ and inheriting the V-cycle optimality from the results in~\cite{ADS}.

Starting from the truncated Fourier sum of order $s$ of the symbol $f_{\alpha}$
  \begin{equation}\label{eq:band_app}
        g_s(\,x)\,=2\sum_{k=0}^{s}{g_{k}^{(\alpha)}\cos{\left((k-1)x\right)}},
\end{equation}
 we consider as band approximation of $A^{\alpha}_{M}$ the associated Toeplitz matrix $ {_{s}\mathcal{\bf B}^{\alpha}_{M}}=T_M(g_s)$ explicitly given by
  \begin{equation}\label{eq:Band_matrix}
    {_{s}\mathcal{\bf B}^{\alpha}_{M}}=-
	\begin{bmatrix}
		& 2{g_{1}^{(\alpha)\,}} & {g_{0}^{(\alpha)\,}}+{g_{2}^{(\alpha)\,}} & \cdots & {g_{s}^{(\alpha)\,}}&  &  &\\
		& {g_{0}^{(\alpha)\,}}+{g_{2}^{(\alpha)\,}} & 2{g_{1}^{(\alpha)\,}} & {g_{0}^{(\alpha)\,}}+{g_{2}^{(\alpha)\,}} &\cdots &{g_{s}^{(\alpha)\,}} & &\\
		& \vdots & \ddots & \ddots & \ddots &\ddots &{g_{s}^{(\alpha)\,}} &\\
		& {g_{s}^{(\alpha)\,}} & \ddots & \ddots &\ddots &\ddots &\vdots &\\
		&  & {g_{s}^{(\alpha)\,}}& \ddots & {g_{0}^{(\alpha)\,}}+{g_{2}^{(\alpha)\,}} & 2{g_{1}^{(\alpha)\,}} & {g_{0}^{(\alpha)\,}}+{g_{2}^{(\alpha)\,}} &\\
		& &  &{g_{s}^{(\alpha)\,}}& \cdots & {g_{0}^{(\alpha)\,}}+{g_{2}^{(\alpha)\,}} & 2{g_{1}^{(\alpha)\,}}&
	\end{bmatrix}_{{M\times M}}.
\end{equation}
The emerging banded matrix is defined as
\begin{align}\label{eq:band_coef}
    {_{s}\Tilde{A}^{\alpha}_{M}}=\overline{c}\,\,_{s}\mathcal{\bf B}_{M}^{\alpha},
\end{align}


As already discussed in \cite{moghaderi2017spectral,donatelli2020multigrid}, concerning the application of our multigrid method to a linear system with the coefficient matrix $_{s}\Tilde{A}^{\alpha}_{M}$, we have the following feature thanks to the results in \cite{ADS,arico2007}:
\begin{itemize}
\item The computation of the coarser matrices by the Galerkin approach \eqref{eq:gal}, using the projector defined in \eqref{eq:Pk}, preserves the band structure at the coarser levels, see \cite[Proposition 2]{arico2007}.
\item If the function $g_s$ is nonnegative, then the V-cycle is optimal, i.e., has a constant convergence rate independent of the size $M$ and a computational cost per iteration proportional to $O(sM)$, that is $O(M\log M)$ with the choice of $s$ such that $M=2^s-1$.
\end{itemize}

About the spectral features of preconditioned Toeplitz matrices with Toeplitz preconditioners, we recall the well-known results of both localization and distribution type in the sense of Weyl in the following theorem (see \cite{Sergo, GLT_I_book}).



\begin{theorem}\label{th-prec}
Let $f$ be real-valued, $2\pi$-periodic, and continuous and  $g$ be nonnegative, $2\pi$-periodic, continuous, with $\max g>0$. Then
\begin{itemize}
\item $T_M(g)$ is positive definite for every $M$;
\item if $r=\inf \frac{f}{g}$, $R=\sup \frac{f}{g}$ with $r<R$, then all the eigenvalues of $T_M^{-1}(g)T_M(f)$ lie in the open set $(r,R)$ for every $M$; 
\item $\{T_M(f)\}_M,\{T_M(g)\}_M,\{T_M^{-1}(g)T_M(f)\}_M$ are distributed in the Weyl sense as $f,g,\frac{f}{g}$, respectively, i.e. for all $F$ in the set of continuous functions with compact support over $\mathbb C$ it holds
	\begin{align}\label{distribution:sv-eig}
	\lim_{n\to\infty}\frac{1}{M}\sum_{j=1}^{M}F(\lambda_j(\mathcal{A}_M))=
	\frac1{b-a}\int_a^b F(\kappa(t)) dt.
	\end{align}
with $A_M=T_M(f),T_M(g),T_M^{-1}(g)T_M(f)$, $\kappa=f,g,\frac{f}{g}$, $\lambda_j(\mathcal{A}_M)$ the eigenvalues of $A_M$, and $[a,b]=[-\pi,\pi]$.
\end{itemize}
\end{theorem}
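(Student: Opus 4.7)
The plan is to dispatch the three items in order, leveraging in each case the quadratic--form identity
\[
v^{*}T_{M}(h)v \;=\; \frac{1}{2\pi}\int_{-\pi}^{\pi} h(x)\,|p_{v}(x)|^{2}\,dx,\qquad p_{v}(x)=\sum_{k=1}^{M}v_{k}\mathrm{e}^{\iota kx},
\]
valid for any real $2\pi$--periodic $h\in L^{1}$ and any $v\in\mathbb{C}^{M}$. The standing regularity assumptions (continuity of $f,g$, nonnegativity of $g$, $\max g>0$) make all integrals well defined.

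For the first item, continuity of $g$ together with $\max g>0$ and $g\ge 0$ forces $g>0$ on a nonempty open arc of $[-\pi,\pi]$. For $v\neq 0$, $p_{v}$ is a nonzero trigonometric polynomial, hence vanishes at only finitely many points, so $|p_{v}|^{2}>0$ on all but a null subset of that arc. The above quadratic form is therefore strictly positive, giving $T_{M}(g)\succ 0$.

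For the second item, the positive definiteness just established lets me write $T_{M}^{-1}(g)T_{M}(f)$ as similar (via $T_{M}^{1/2}(g)$) to the Hermitian matrix $T_{M}^{-1/2}(g)T_{M}(f)T_{M}^{-1/2}(g)$, so all eigenvalues are real. For an eigenpair $(\lambda,v)$ the generalized Rayleigh quotient reads
\[
\lambda \;=\; \frac{v^{*}T_{M}(f)v}{v^{*}T_{M}(g)v} \;=\; \frac{\int_{-\pi}^{\pi} f(x)|p_{v}(x)|^{2}\,dx}{\int_{-\pi}^{\pi} g(x)|p_{v}(x)|^{2}\,dx}.
\]
Since $rg\le f\le Rg$ pointwise, $\lambda\in[r,R]$ is immediate. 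Strictness is an absorption argument: if $\lambda=r$ then $\int(f-rg)|p_{v}|^{2}\,dx=0$ with nonnegative integrand, so $|p_{v}|^{2}$ must vanish on $\{f-rg>0\}$; but $r<R$ and the continuity of $f,g$ make this set contain a nonempty open interval, contradicting the finiteness of the zero set of the nonzero trigonometric polynomial $p_{v}$. The case $\lambda=R$ is symmetric.

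For the third item, the distributions of $\{T_{M}(f)\}_{M}$ and $\{T_{M}(g)\}_{M}$ as $f$ and $g$ follow from the classical Szeg\H{o}--Tyrtyshnikov--Zamarashkin theorem under our continuity hypotheses. For the preconditioned sequence I would invoke the distribution result for Toeplitz products (see \cite{Sergo,GLT_I_book}): the key idea is to approximate $f$ uniformly by trigonometric polynomials $f_{\varepsilon}$, exploit the fact that $T_{M}(f_{\varepsilon}g^{-1}\cdot g)-T_{M}(f_{\varepsilon}g^{-1})T_{M}(g)$ differs from zero only by a low-rank plus small-norm correction, and then pass to the limit via the GLT machinery (or equivalently Tilli's theorem) to conclude that $\{T_{M}^{-1}(g)T_{M}(f)\}_{M}$ shares the eigenvalue distribution of $\{T_{M}(f/g)\}_{M}$, which is $f/g$. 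The main obstacle is precisely this third item: items 1 and 2 reduce to Rayleigh quotients and the identity principle for trigonometric polynomials, whereas the Weyl distribution of a preconditioned Toeplitz sequence requires the nontrivial algebra of Toeplitz products and a careful trace--norm approximation argument.
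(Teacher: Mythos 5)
The paper does not actually prove this theorem: it is explicitly \emph{recalled} as a collection of well-known localization and distribution results, with the proofs delegated to the cited references \cite{Sergo,GLT_I_book}. Measured against those sources, your reconstruction of the first two items is correct and complete, and it is precisely the classical argument: the quadratic-form identity $v^{*}T_{M}(h)v=\frac{1}{2\pi}\int_{-\pi}^{\pi}h|p_{v}|^{2}$, positivity of the integral because a nonzero trigonometric polynomial has finitely many zeros while $g>0$ on an open arc, and then the generalized Rayleigh quotient plus the absorption argument for strict localization in $(r,R)$. One small point you should make explicit: the pointwise inequality $rg\le f\le Rg$ is immediate only where $g>0$; finiteness of $r$ and $R$ forces $f$ to vanish wherever $g$ does (otherwise $\sup|f/g|=\infty$), after which the inequality holds everywhere and your argument goes through, while if $r=-\infty$ or $R=+\infty$ the corresponding side of the localization is vacuous. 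For the third item you give only an outline, but it is the right outline (Szeg\H{o}--Tyrtyshnikov--Zamarashkin for $\{T_M(f)\}$ and $\{T_M(g)\}$, and the GLT algebra or Tilli-type product/low-rank-plus-small-norm perturbation arguments for the preconditioned sequence); you correctly flag this as the genuinely nontrivial part. Since the paper itself offers no more than a citation here, your treatment is at least as detailed as the paper's, and no step you do carry out is wrong.
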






In other words, a good Toeplitz preconditioner for a Toeplitz matrix generated by $f$ should be generated by $g$ such that $\frac{f}{g}$ is close to $1$ in a certain metric, for instance in $L^\infty$ or in $L^1$ norm.

About the band preconditioner in \eqref{eq:Band_matrix}, Figure \ref{fig: f_{g_s}} shows the graphs of $g=g_s$ and  $f=f_{\alpha}$ for two different values of $\alpha$, while $\delta_s=f_{\alpha}-g_s$ are depicted in Figure \ref{fig: delta_ftn}, for some values of $s$. Obviously, the quality of the approximation grows increasing $s$, but it is good enough already for a small $s$, in particular when $\alpha$ approaches the value $2$ (see the scale of the $y-$axis in Figure \ref{fig: delta_ftn}).

More importantly, in the light of Theorem \ref{th-prec}, Figure \ref{fig: kappa_ftn} depicts the functions $\kappa_s-1$ with $\kappa_s=\frac{f_{\alpha}}{g_s}$. This quantity measures the relative approximation, which is the one determining the quality of the preconditioning since $(\kappa_s-1,[0,\pi])$ is the distribution function of the shifted preconditioned matrix-sequence
\[
\{T_M^{-1}(g)T_M(f)-I_M\}_M, \ \ \ \ g=g_s,\ f=f_{\alpha},
\]
in the sense of Weyl and in accordance again with Theorem \ref{th-prec}.
Note that the function $\kappa_s-1$ is almost zero except around the zero of $f_{\alpha}$ at the origin.

\begin{figure}
		\centering
		\begin{subfloat}[$\alpha=1.2$]
			{\resizebox*{6cm}{!}{\includegraphics[width=\textwidth]{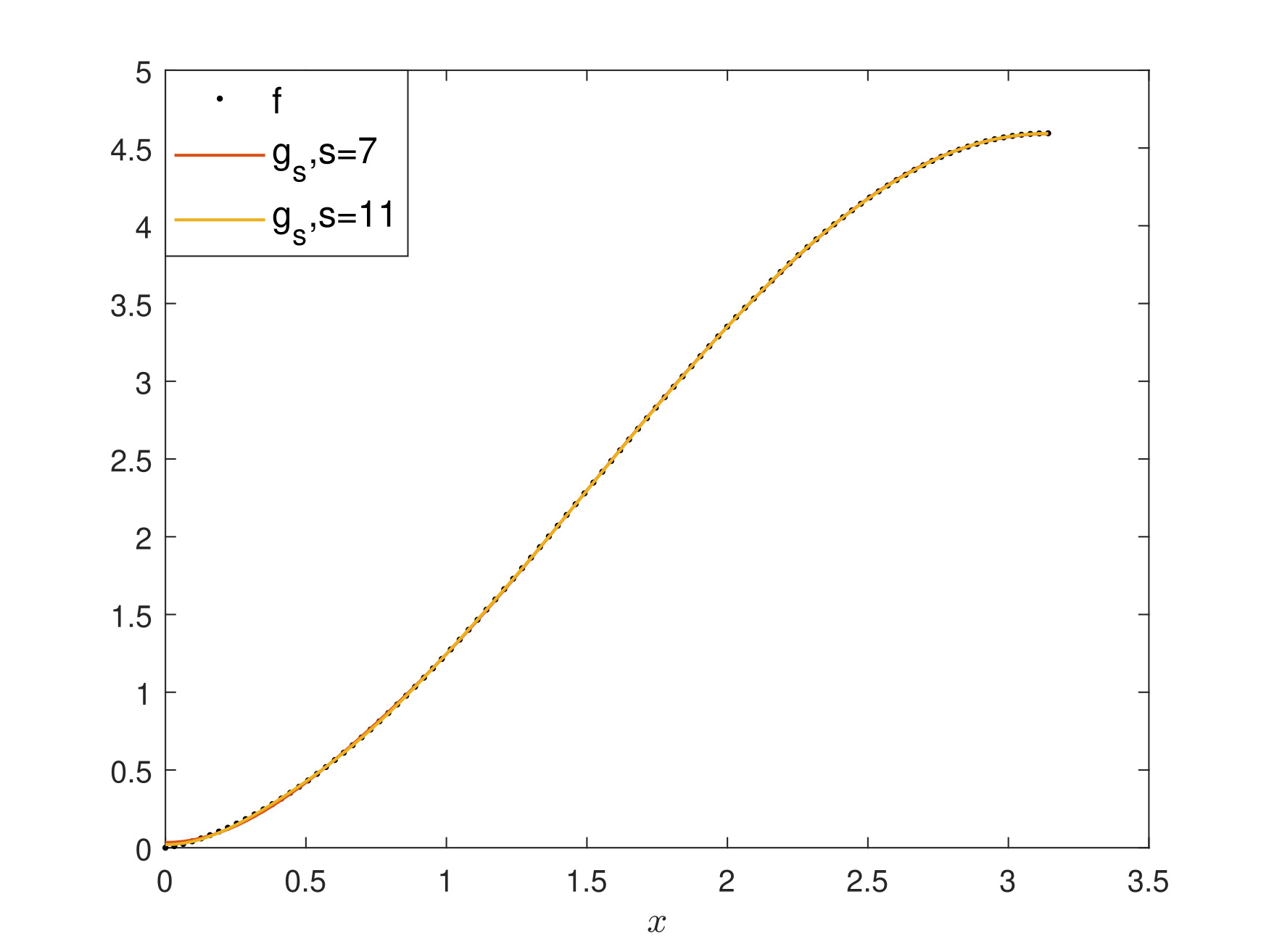}}}
		\end{subfloat}
  \begin{subfloat}[$\alpha=1.8$]
			{\resizebox*{6cm}{!}{\includegraphics[width=\textwidth]{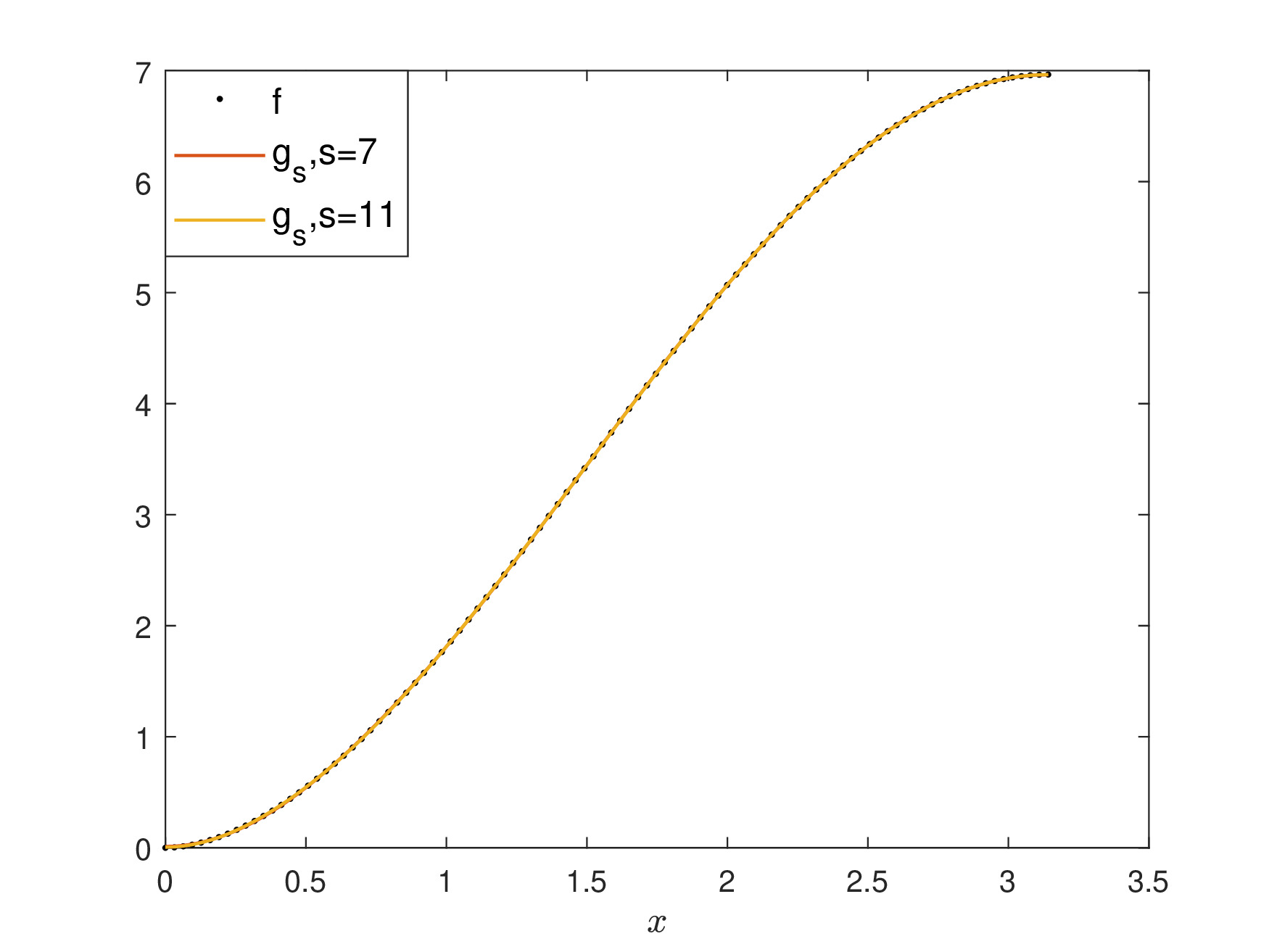}}}
		\end{subfloat}
		\caption{Plots of $f$ and ${g}_{s}$.}
	\label{fig: f_{g_s}}
	\end{figure}
 \begin{figure}
		\centering
		\begin{subfloat}[$\alpha=1.2$]
			{\resizebox*{6cm}{!}{\includegraphics[width=\textwidth]{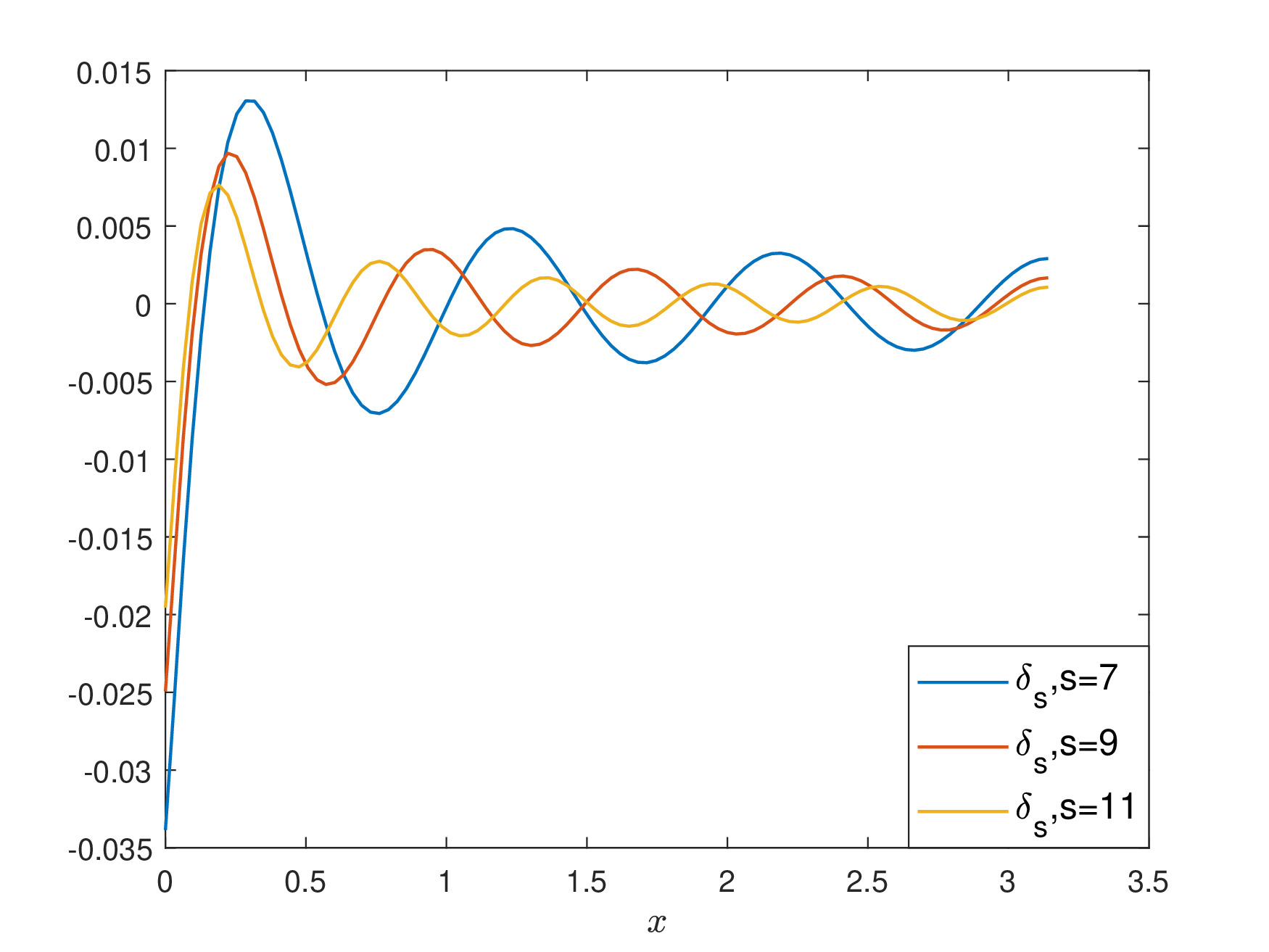}}}
		\end{subfloat}
  \begin{subfloat}[$\alpha=1.8$]
			{\resizebox*{6cm}{!}{\includegraphics[width=\textwidth]{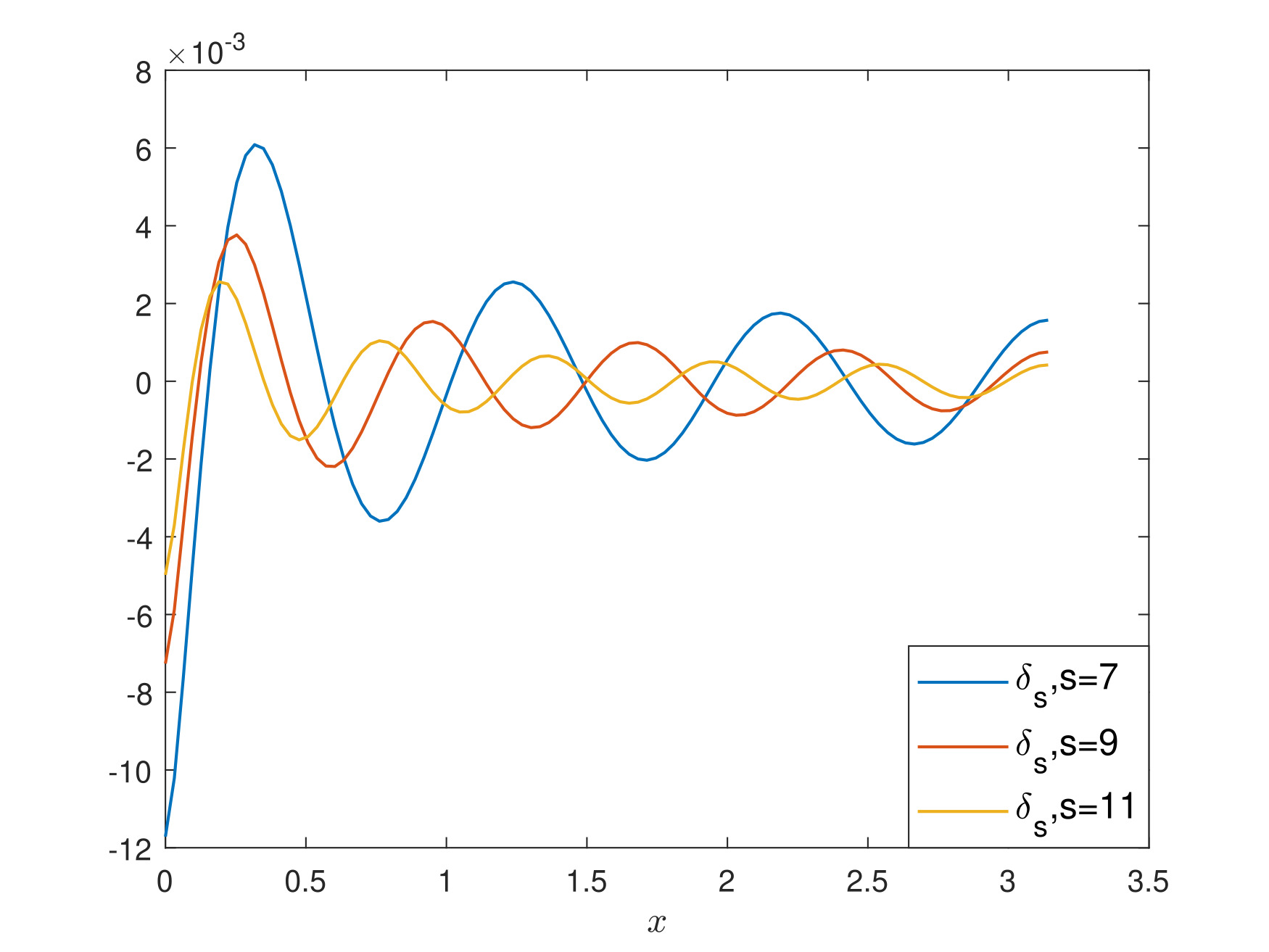}}}
		\end{subfloat}
		\caption{Plots of ${\delta}_{s}$.}
	\label{fig: delta_ftn}
	\end{figure}
  \begin{figure}
		\centering
		\begin{subfloat}[$\alpha=1.2$]
			{\resizebox*{6cm}{!}{\includegraphics[width=\textwidth]{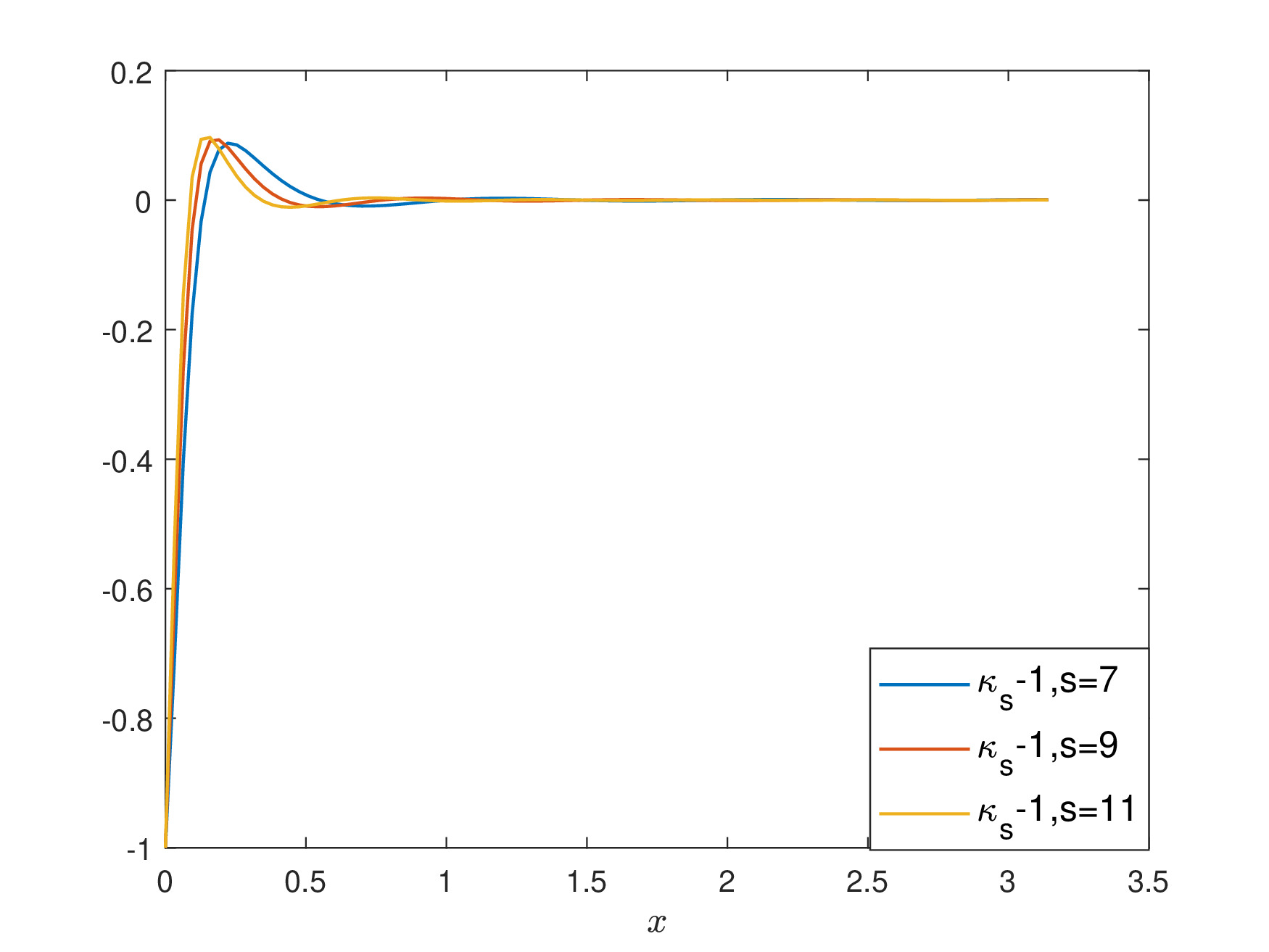}}}
		\end{subfloat}
  \begin{subfloat}[$\alpha=1.8$]
			{\resizebox*{6cm}{!}{\includegraphics[width=\textwidth]{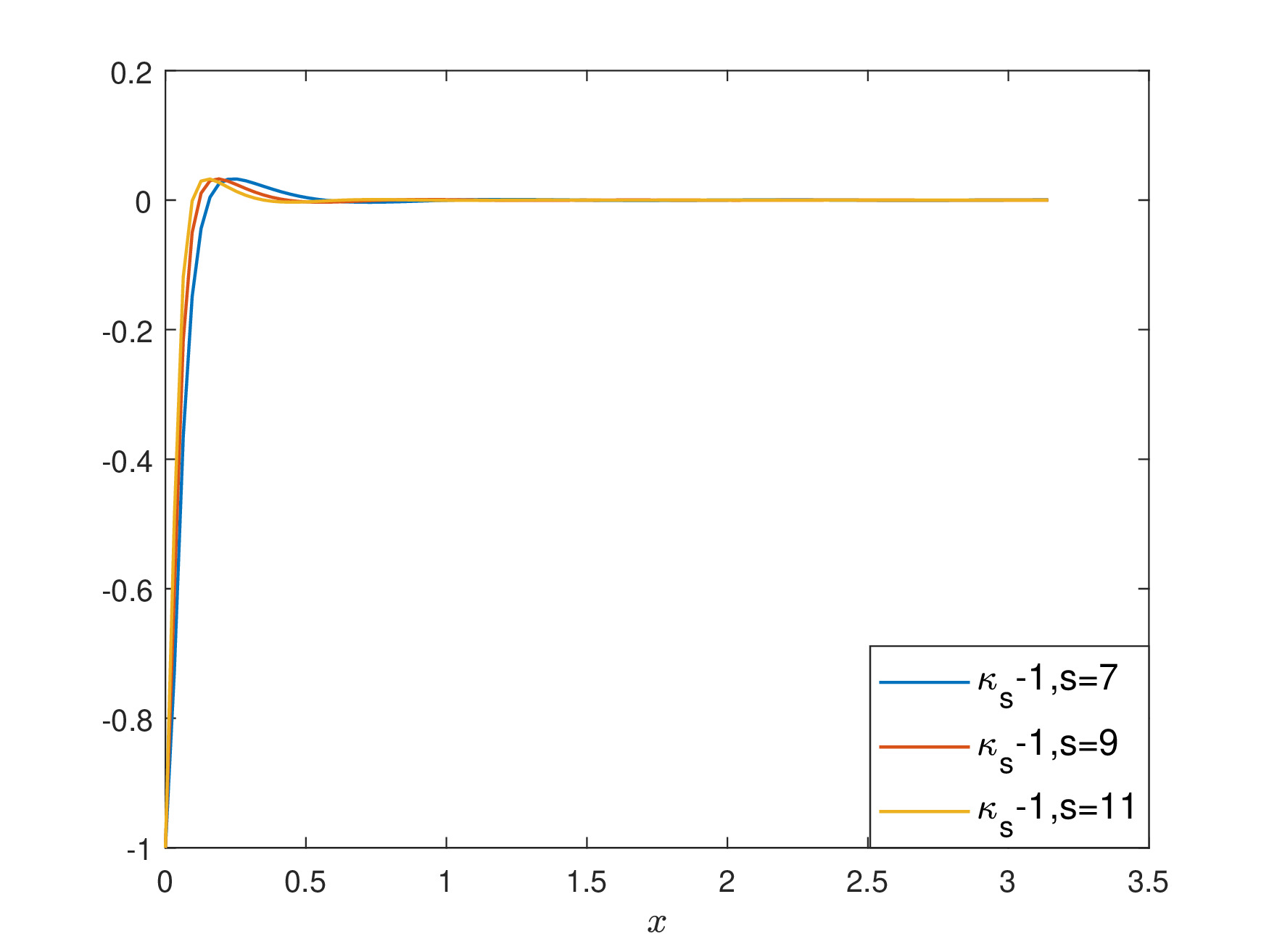}}}
		\end{subfloat}
		\caption{Plots of ${\kappa}_{s}-1$.}
	\label{fig: kappa_ftn}
	\end{figure}

\section{Numerical Examples}\label{sec:num}
In this section, we present some numerical examples, to verify the effectiveness of the MGM used both as a solver and preconditioner.
In what follows:
\begin{itemize}
    \item $V_{\nu_1}^{\nu_2}$ consists of a Galerkin V-cycle with linear interpolation as grid transfer operator and $\nu_1$ and $\nu_2$ iterations of pre and post-smoother weighted Jacobi, respectively;
    \item $P_{C}$ and $P_{S}$ represent the Chan \cite{chan1988optimal} and Strang circulant preconditioner, respectively;
    \item $P{_{s}V_{\nu_1}^{\nu_2}}$ denotes the banded preconditioner defined as in 
    equation \eqref{eq:Band_matrix} and inverted using MGM with Galerkin approach $V_{\nu_1}^{\nu_2}$;
    \item $PV_{\nu_1}^{\nu_2}$ and $\widetilde{P}V_{\nu_1}^{\nu_2}$ are multigrid preconditioners, both as Galerkin and Geometric approach, respectively.
\end{itemize}
The aforementioned preconditioners are combined with CG and GMRES computationally performed using built-in \textit{pcg} and \textit{gmres} Matlab functions.
The stopping criterion is chosen as
\begin{equation*}
    \frac{\| r^{k}\|}{\| r^{0}\|}<10^{-8},
\end{equation*}
where $\|\cdot\|$ denotes the Euclidean norm, $r^{k}$ is the residual vector at the $k$-th iteration. The initial guess is fixed as the zero vector. All the numerical experiments are run by using MATLAB R2021a on HP 17-cp0000nl computer with configuration, AMD Ryzen 7 5700U with Radeon Graphics CPU and 16GB RAM.

\paragraph{Example 1.} Consider the following 1D-RFDE given in equation \eqref{eq:RFDE}, with $\Omega=[\,0,1\,]$ and the source term built from the exact solution
\begin{equation}\label{eq:Ex1}
    u(x)=x^{2}(\,1-x\,)^{2}.
\end{equation}

\begin{table}
	\caption{Example 1: Number of iterations for $\alpha=1.2,\,1.5$, and $1.8$ with $\omega^\star=0.6964,\,0.7071$, and $0.6892$, respectively.}
	\begin{small}
		\setlength{\tabcolsep}{5.0pt}
		\begin{center}
			\begin{tabular}{c c c c c c c c c c c c c c c c c c c c c}
				\hline
				\noalign{\vskip 1mm}
				\multirow{1}{*} {}&
				\multicolumn{3}{c}{${}$} &
				\multicolumn{3}{c}{${\mbox{\bf Galerkin}}$} &
				\multicolumn{2}{c}{${}$} &
				\multicolumn{3}{c}{${\mbox{\bf Geometric}}$} &
                \multicolumn{3}{c}{${}$} \\
               \cmidrule(r){4-9}\cmidrule(r){10-15}
				\noalign{\vskip 1mm}
				\multirow{1}{*} {}&
				\multicolumn{2}{c}{${}$} &
				\multicolumn{2}{c}{${\mbox{TGM}}$} &
				\multicolumn{1}{c}{${}$}&
				\multicolumn{3}{c}{${\mbox{V-Cycle}}$} &
				\multicolumn{3}{c}{${\mbox{TGM}}$} &
				\multicolumn{3}{c}{${\mbox{V-Cycle}}$} &
                \multicolumn{6}{c}{${\mbox{\bf Preconditioners}}$}\\
				\cmidrule(r){4-6}\cmidrule(r){7-9}\cmidrule(r){10-12}\cmidrule(r){13-15}\cmidrule(r){16-21}
				$\alpha$&$M+1$& ${\mbox{CG}}$& ${V_{0}^{1}}$&${V_{1}^{0}}$& ${V_{1}^{1}}$ &${V_{0}^{1}}$&${V_{1}^{0}}$ &${V_{1}^{1}}$& ${V_{0}^{1}}$&${V_{1}^{0}}$& ${V_{1}^{1}}$ &${V_{0}^{1}}$&${V_{1}^{0}}$ &${V_{1}^{1}}$&${s}$& ${P{_{s}{V}_{1}^{1}}}$&${P}{V_{1}^{1}}$& $\widetilde{P}{V_{1}^{1}}$&${P_{C}}$&${P_{S}}$\\ \hline
	${}$&$2^6$                &${32 }$&${17}$&${17}$&${9}$&${17}$&${17}$&${9 }$&${16}$&${16}$&${13}$&${37}$&${38}$&${31}$&${7 }$&${8}$&${6}$&${11}$&${9 }$&${5}$\\
	${}$&$2^7$                &${63 }$&${16}$&${16}$&${9}$&${16}$&${17}$&${10}$&${16}$&${16}$&${13}$&${43}$&${43}$&${34}$&${7 }$&${8}$&${6}$&${12}$&${10}$&${6}$\\
${{\bf{\alpha}}=1.2}$&$2^8$   &${110}$&${16}$&${16}$&${9}$&${16}$&${17}$&${10}$&${16}$&${16}$&${12}$&${48}$&${48}$&${37}$&${9 }$&${10}$&${6}$&${12}$&${12}$&${6}$\\
	${}$&$2^9$                &${178}$&${16}$&${16}$&${9}$&${16}$&${18}$&${10}$&${16}$&${16}$&${12}$&${52}$&${52}$&${40}$&${9 }$&${13}$&${7}$&${12}$&${13}$&${6}$\\
	${}$&$2^{10}$             &${279}$&${15}$&${15}$&${8}$&${16}$&${18}$&${11}$&${15}$&${15}$&${11}$&${55}$&${56}$&${42}$&${11}$&${18}$&${7}$&${12}$&${14}$&${7}$\\
     \hline
    ${}$&$2^{6}$             &${32 }$&${17}$&${17}$&${9}$&${17}$&${17}$&${10}$&${17}$&${17}$&${10}$&${23}$&${22}$&${16}$&${7 }$&${7}$&${6}$&${8}$&${9 }$&${5}$\\
    ${}$&$2^{7}$             &${62 }$&${17}$&${17}$&${9}$&${16}$&${17}$&${9 }$&${17}$&${17}$&${10}$&${25}$&${24}$&${17}$&${7 }$&${8}$&${6}$&${8}$&${11}$&${5}$\\
${{\bf{\alpha}}=1.5}$&$2^{8}$&${111}$&${17}$&${17}$&${9}$&${16}$&${17}$&${10}$&${17}$&${17}$&${10}$&${27}$&${26}$&${19}$&${9 }$&${10}$&${6}$&${8}$&${13}$&${7}$\\
    ${}$&$2^{9}$             &${192}$&${16}$&${16}$&${9}$&${16}$&${18}$&${10}$&${16}$&${16}$&${9 }$&${28}$&${28}$&${20}$&${9 }$&${14}$&${6}$&${9}$&${14}$&${7}$\\
    ${}$&$2^{10}$            &${328}$&${16}$&${16}$&${9}$&${16}$&${18}$&${10}$&${16}$&${16}$&${9 }$&${30}$&${30}$&${20}$&${11}$&${19}$&${7}$&${9}$&${16}$&${8}$\\
    \hline
    ${}$&$2^{6}$             &${32 }$&${17}$&${17}$&${10}$&${17}$&${17}$&${11}$&${17}$&${17}$&${10}$&${18}$&${21}$&${13}$&${7 }$&${8}$&${7}$&${7}$&${10}$&${6}$\\
    ${}$&$2^{7}$             &${64 }$&${17}$&${17}$&${10}$&${17}$&${18}$&${11}$&${17}$&${17}$&${10}$&${19}$&${20}$&${13}$&${7 }$&${9}$&${7}$&${8}$&${13}$&${6}$\\
${{\bf{\alpha}}=1.8}$&$2^{8}$&${126}$&${17}$&${17}$&${10}$&${17}$&${18}$&${11}$&${17}$&${17}$&${10}$&${20}$&${22}$&${13}$&${9 }$&${10}$&${7}$&${8}$&${15}$&${7}$\\
    ${}$&$2^{9 }$            &${238}$&${17}$&${17}$&${9 }$&${17}$&${19}$&${11}$&${17}$&${17}$&${9 }$&${21}$&${23}$&${14}$&${9 }$&${14}$&${7}$&${8}$&${17}$&${7}$\\
    ${}$&$2^{10}$            &${448}$&${17}$&${17}$&${9 }$&${18}$&${20}$&${12}$&${17}$&${17}$&${9 }$&${22}$&${24}$&${14}$&${11}$&${18}$&${7}$&${8}$&${21}$&${7}$\\
				\hline
				\hline
			\end{tabular}
		\end{center}
	\end{small}
	\label{tab:T1}
\end{table}

Table \ref{tab:T1} shows the number of iterations of the proposed multigrid methods and preconditioners for three different values of $\alpha$.

Regarding multigrid methods, the observations are as follows:
\begin{itemize}
\item The Galerkin approach is robust, even as V-cycle, in agreement with the previous theoretical analysis.
\item The geometric approach is robust only when $\alpha>1.5$.
\item The robustness of the geometric multigrid can be improved by using it as a preconditioner, as seen in the column $\widetilde{P}{V_{1}^{1}}$. In particular, it is even more robust than the band multigrid preconditioner $P_{s}{V}_{1}^{1}$.
\end{itemize}

When comparing the circulant preconditioners, the Strang preconditioner $P_{S}$ is preferable to the optimal Chan preconditioner $P_{C}$.

\subsection{2D Problems}\label{sect:2D}
Here we extend our study to two-dimensional variable coefficient RFDE, given by
 \begin{align}\label{eq:2D_RFDE}
			-{\,c}(x,y)\frac{\partial^{\alpha} u(x,y)\,}{\partial\vert x\vert^{\alpha}}-{\,e}(x,y)\frac{\partial^{\beta} u(x,y)\,}{\partial\vert y\vert^{\beta}} =m(x,y)\,,\quad (x,y)\in\Omega=[a_1,b_1]\times[a_2,b_2],
	\end{align}
	with boundary condition
	\begin{align}\label{eq:2D_BC_RFDE}
			u(x,y)\ =0,\quad (x,y)\in\partial\Omega,
	\end{align}
where ${c}(x,y),\,{e}(x,y)$ are non-negative diffusion coefficients, $m(x,y)$ is the source term, and \begin{math}\frac{\partial^{\gamma} u(x,y)\,}{\partial\vert x\vert^{\gamma}}\end{math}, \begin{math}\frac{\partial^{\gamma} u(x,y)\,}{\partial\vert y\vert^{\gamma}}\end{math} are the  Riesz fractional derivatives for $\gamma\in\{\alpha,\beta\},\alpha,\beta\in(\,1,2)\,$. In order to define the uniform partition of the spatial domain $\Omega$, we fix $M_1, M_2\in\mathbb{N}$ and  define
\begin{equation*}\label{3D_GRID}
\begin{array}{lllll}
x_j=a_1+jh_x,		&& h_x=\frac{b_1-a_1}{M_1+1}, 		&& j=0,...,M_1+1,  	\\
y_j=a_2+jh_y,	    && h_y=\frac{b_2-a_2}{M_2+1},  		&& j=0,...,M_2+1.
\end{array}
\end{equation*}
Let us now introduce the following $M$-dimensional vectors, with $M=M_{1}M_{2}$:
\begin{align*}
{\bf{u}}=&[\,u_{1,1},u_{2,1},\cdots,u_{M_{1},1},u_{1,2},u_{2,2},\cdots,u_{M_{1},2},\cdots,u_{1,M_{2}},u_{2,M_{2}},\cdots,u_{M_{1},M_{2}}]^T,\\
{\bf{c}}=&[\,{c}_{1,1},{c}_{2,1},\cdots,{c}_{M_{1},1},{c}_{1,2},{c}_{2,2},\cdots,{c}_{M_{1},2},\cdots,{c}_{1,M_{2}},{c}_{2,M_{2}},\cdots,{c}_{M_{1},M_{2}}]^T,\\
{\bf{e}}=&[\,{e}_{1,1},{e}_{2,1},\cdots,{e}_{M_{1},1},{e}_{1,2},{e}_{2,2},\cdots,{e}_{M_{1},2},\cdots,{e}_{1,M_{2}},{e}_{2,M_{2}},\cdots,{e}_{M_{1},M_{2}}]^T,\\
{\bf{m}}=&[{m}_{1,1},{m}_{2,1},\cdots,{m}_{M_{1},1},{m}_{1,2},{m}_{2,2},\cdots,{m}_{M_{1},2},\cdots,{m}_{1,M_{2}},{m}_{2,M_{2}},\cdots,{m}_{M_{1},M_{2}}]^T,
\end{align*}
where $u_{i,j}=u(x_{i},y_{j})$ $c_{i,j}=c(x_{i},y_{j})$, and $e_{i,j}=e(x_{i},y_{j})$.

Using the shifted Gr\"unwald difference formula for both fractional derivatives in $x$ and $y$ leads to the matrix-vector form
	\begin{align}\label{eq: 2D_Final_Linear_sys}
		{A^{(\alpha,\beta)}_{M}}{\bf{u}}={\bf{m}},
	\end{align}
with the following (diagonal times two-level Toeplitz structured) coefficient matrix
\begin{align}\label{eq: 2D_coefficient_matrix}
        {A^{(\alpha,\beta)}_{M}}=\overline{c}_{\alpha}\bigl[C\bigl({I_{M_{2}}}\otimes \bigl({\mathcal{G}^{\alpha}_{M_{1}}}+ ({\mathcal{G}^{\alpha}_{M_{1}}})^{T}\bigr)\bigr)\bigr]+\overline{c}_{\beta}\bigl[E\bigl(\bigl({\mathcal{G}^{\beta}_{M_{2}}}+({\mathcal{G}^{\beta}_{M_{2}}})^{T}\bigr)\otimes I_{M_{1}}\bigr)\bigr],
\end{align}
where $C={\rm diag}({\bf c})$, $E={\rm diag}({\bf e})$, $\overline{c}_{\alpha}=\frac{c(\alpha)}{h^{\alpha}_{x}}$, $\overline{c}_{\beta}=\frac{c(\beta)}{h^{\beta}_{y}}$ and $c(\gamma)$, ${\mathcal{G}^{\gamma}_{M_{i}}}$ is defined as in Section \ref{sec:setting}.

Note that, the coefficient matrix ${A^{(\alpha,\beta)}_{M}}$ is strictly diagonally dominant M-matrix. Moreover, when the diffusion coefficients are constant and equal, the coefficient matrix ${A^{(\alpha,\beta)}_{M}}$ is a symmetric positive definite Block-Toeplitz with Toeplitz Blocks (\mbox{BTTB}) matrix.

In the following examples, we compare the performance of the MGMs, banded, circulant, and  $\tau$-based preconditioners. Precisely,
\begin{itemize}
\item Multigrid preconditioners, $P{V_{1}^{1}}$ and ${P}{\widetilde{V}_{1}^{1}}$. In both cases, the grid transfer operator is a bilinear interpolation, the weighted Jacobi is used as smoother, and one iteration of the V-cycle is performed to approximate the inverse of ${A^{(\alpha,\beta)}_{M}}$. Following the 1D case, the relaxation parameter of Jacobi $\omega^\star$ is computed as
\begin{equation}\label{eq:w2D_RFDE}
\omega^\star=\frac{4}{5}\zeta,
\end{equation}
where $\zeta=\frac{2\hat{\mathcal{F}}_0}{\|\mathcal{F}_{\alpha,\beta}(x,y)\|_\infty}$, with $\hat{\mathcal{F}}_0$ being the first Fourier coefficient of $\mathcal{F}_{\alpha,\beta}(x,y)$, the symbol of $A^{(\alpha,\beta)}_{M}$ (see \cite{moghaderi2017spectral} for more details).
\item According to the 1D numerical results, the Strang circulant preconditioner outperforms the optimal Chan preconditioner. Therefore, in the following, we consider only the 2-level Strang circulant preconditioner  defined as
        \begin{align}\label{eq:PS2D}
        {P_{S}}&=\overline{c}_{\alpha}{c^{av}}\big(I_{M_{2}}\otimes \bigr(S({\mathcal{G}^{\alpha}_{M_{1}}})+{S}({\mathcal{G}^{\alpha}_{M_{1}}})^{T}\big)\big)+\overline{c}_{\beta}{e^{av}}\big( \big({S}({\mathcal{G}^{\beta}_{M_{2}}})+{S}({\mathcal{G}^{\beta}_{M_{2}}})^{T}\big)\otimes I_{M_{1}}\big),
    \end{align}
    where ${c^{av}}=\mbox{mean}(\bf{c})$, ${e^{av}}=\mbox{mean}(\bf{e})$, and ${S}(T_M)$ is the Strang circulant approximation of the Toeplitz matrix $T_M$.
    \item Like in the 1D case, here we extend the banded preconditioner $P{_{s}V_{1}^{1}}$ strategy to the 2D case. To approximate the inverse of $_{s}\Tilde{A}^{(\alpha,\beta)}_{M}$, we performed $V_{1}^{1}$ iterations of V-cycle with Galerkin approach, weighted Jacobi as smoother and bilinear interpolation as grid transfer operator.

    The 2D-banded matrix is defined as
    \begin{align*}
    _{s}\Tilde{A}^{(\alpha,\beta)}_{M}&=\overline{c}_{\alpha}C\bigl({I_{M_{2}}}\otimes\bigl( {_{s}\mathcal{G}^{\alpha}_{M_{1}}}+ (_{s}{\mathcal{G}^{\alpha}_{M_{1}}})^{T}\bigr)\bigr) + \overline{c}_{\beta} E\bigl(\bigl(_{s}{\mathcal{G}^{\beta}_{M_{2}}} + (_{s}{\mathcal{G}^{\beta}_{M_{2}}})^{T}\bigr)\otimes I_{M_{1}}\bigr),
    \end{align*}
    \item $P_{\bf\tau}$ denotes the $\tau$ based preconditioner. For the symmetric Toeplitz matrix $\mathcal{\bf G}_{M_{i}}^{\left(\gamma\right)}= {_{s}\mathcal{G}^{\gamma}_{M_{i}}}+({_{s}\mathcal{G}^{\gamma}_{M_{i}}})^T$, with $i=1,2$, the $\tau$ matrix is
\begin{align}\label{eq: tau_matrix}
    \tau(\mathcal{\bf G}_{M_{i}}^{\left(\gamma\right)})=\mathcal{\bf G}_{M_{i}}^{\left(\gamma\right)}-H_{M_{i}},
\end{align}
where $H_{M_{i}}$ is a Hankel matrix, whose entries along antidiagonals are constant and equal to
\begin{align*}
        -[{g_{3}^{(\gamma)\,}},\,{g_{4}^{(\gamma)\,}},\,\dots,\,{g_{M_{i}}^{(\gamma)\,}},\,0,\,0,\,0,\,{g_{M_{i}}^{(\gamma)\,}},\,\dots,{g_{4}^{(\gamma)\,}},\,{g_{3}^{(\gamma)\,}}].
\end{align*}
Thanks to \cite{bini1983spectral}, the $\tau$ matrix can be diagonalized as follows
\begin{align}\label{eq: tau_matrix_diagonalization}
    \tau(\mathcal{\bf G}_{M_{i}}^{\left(\gamma\right)})=S_{M_{i}}D_{M_{i}}S_{M_{i}},
\end{align}
where $D_{M_{i}}=\text{diag}(\lambda(\mathcal{\bf G}_{M_{i}}^{\left(\gamma\right)}-H_{M_{i}}))$ and $S_{M_{i}}$ is the sine transform matrix whose entries are
\begin{equation}\label{eq: sine_transform_matrix}
    S_{M_{i}}=\left[\sqrt{\frac{2}{M_{i}+1}}\sin{\bigg(\frac{ij\pi}{M_{i}+1}\bigg)}\right]_{i,j=1}^{M_{i}}.
\end{equation}
The $\tau$ based preconditioner of the resulting linear system \eqref{eq: 2D_Final_Linear_sys} is
\begin{align}\label{eq: tau_based_preconditioner}
    P_{\bf\tau}=\mathcal{D}_{M}{\bf{S}}_{M}{\bf D}_{M}{\bf{S}}_{M},
\end{align}
with ${\bf{S}}_{M}=S_{M_{1}}\otimes S_{M_{2}}$, ${\bf D}_{M}=\big({I_{M_{2}}}\otimes D_{M_{1}}+D_{M_{2}}\otimes {I_{M_{1}}}$\big) and $\mathcal{D}_{M}$ is the diagonal matrix that contains the average of the diffusion coefficients. Regarding the computational point of view, $P_{\bf\tau}$ is extremely suitable because the matrix-vector product can be performed in $O(M\log M)$ operations through the discrete sine transform (DST) algorithm.

\end{itemize}

\paragraph{Example 2.} In this example, we consider the following 2D-RFDE given in equation \eqref{eq:2D_RFDE}, with $c(x,y)=e(x,y)=1$ and $\Omega=[\,0,1\,]\times[\,0,1\,]$. The source term and exact solution are given as
\begin{equation}\label{eq:2D_Exact1}
    u(x,y)=x^{2}(\,1-x\,)^{2}y^{2}(\,1-y\,)^{2},
\end{equation}
and
\begin{align*}\label{eq:2D_source1}
    m(x,y)=&\frac{1}{\cos(\frac{\alpha\pi}{2})}y^{2}(\,1-y\,)^{2}\biggl[\frac{2}{\Gamma(3-\alpha)}\biggl(x^{2-\alpha}+(1-x)^{2-\alpha}\biggr)-\frac{12}{\Gamma(4-\alpha)}\biggl(x^{3-\alpha}+(1-x)^{3-\alpha}\biggr)\nonumber\\
    &+\frac{24}{\Gamma(5-\alpha)}\biggl(x^{4-\alpha}+(1-x)^{4-\alpha}\biggr)\biggr]+\frac{1}{\cos(\frac{\beta\pi}{2})}x^{2}(\,1-x\,)^{2}\biggl[\frac{2}{\Gamma(3-\beta)}\biggl(y^{2-\beta}+(1-y)^{2-\beta}\biggr)\nonumber\\
    &-\frac{12}{\Gamma(4-\beta)}\biggl(y^{3-\beta}+(1-y)^{3-\beta}\biggr)+\frac{24}{\Gamma(5-\beta)}\biggl(y^{4-\beta}+(1-y)^{4-\beta}\biggr)\biggr].
\end{align*}

\begin{table}
	\caption{Example 2: Number of iterations for different values of $\alpha$ and $\beta$ with $M_{1}=M_{2}$.}
	\begin{small}
		\setlength{\tabcolsep}{5pt}
		\begin{center}
			\begin{tabular}{c c c c c c c c c c c c c c c c c c}
				\hline
				\noalign{\vskip 1mm}
				\multirow{1}{*} {}&
				\multicolumn{2}{c}{${}$} &
                \multicolumn{2}{c}{${\mbox{\bf Galerkin}}$} &
				\multicolumn{2}{c}{${\mbox{\bf Geometric}}$} &
                \multicolumn{10}{c}{$\mbox{\bf Preconditioners}$} \\
				\cmidrule(r){4-5}\cmidrule(r){6-7}\cmidrule{8-17}
				$(\alpha,\beta)$&$M_{1}+1$& ${\mbox{CG}}$& ${V_{1}^{1}}$& ${\mbox{T(s)}}$& ${V_{1}^{1}}$ &${\mbox{T(s)}}$& ${P}{V_{1}^{1}}$&${\mbox{T(s)}}$& $\widetilde{P}{V_{1}^{1}}$&${\mbox{T(s)}}$& ${P{_{s}{V}_{1}^{1}}}$&${\mbox{T(s)}}$& $P_{\tau}$&$\mbox{T(s)}$& ${P_{S}}$&${\mbox{T(s)}}$\\ \hline
	${         }$       &$2^5$   &${57 }$&${17}$&${0.036}$&${36}$&${0.134}$&${9 }$&${0.083}$&${13}$&${0.144}$&${10}$&${0.085}$&${6}$&${0.091}$&${13}$&${0.085}$\\
	${(1.1,1.2)}$       &$2^6$   &${93 }$&${14}$&${0.126}$&${43}$&${0.322}$&${9 }$&${0.167}$&${14}$&${0.229}$&${11}$&${0.121}$&${7}$&${0.100}$&${17}$&${0.114}$\\
 ${\omega^\star=0.83}$&$2^7$   &${157}$&${14}$&${0.809}$&${48}$&${0.815}$&${8 }$&${0.604}$&${15}$&${0.465}$&${12}$&${0.325}$&${7}$&${0.144}$&${19}$&${0.297}$\\
    ${         }$       &$2^8$   &${237}$&${14}$&${5.201}$&${52}$&${3.085}$&${8 }$&${3.701}$&${16}$&${1.587}$&${17}$&${1.451}$&${8}$&${0.424}$&${21}$&${0.604}$\\
    ${         }$       &$2^9$   &${383}$&${14}$&${46.42}$&${56}$&${11.09}$&${9 }$&${60.46}$&${17}$&${5.645}$&${26}$&${38.62}$&${8}$&${1.055}$&${24}$&${2.002}$\\
    \hline
    ${         }$       &$2^5$   &${44 }$&${14}$&${0.035}$&${19}$&${0.083}$&${8 }$&${0.082}$&${8 }$&${0.127}$&${9 }$&${0.079}$&${6}$&${0.088}$&${12}$&${0.081}$\\
	${(1.5,1.5)}$       &$2^6$   &${78 }$&${14}$&${0.121}$&${21}$&${0.194}$&${8 }$&${0.151}$&${9 }$&${0.178}$&${10}$&${0.119}$&${6}$&${0.102}$&${13}$&${0.111}$\\
    ${\omega^\star=0.85}$&$2^7$&${136}$&${12}$&${0.737}$&${23}$&${0.418}$&${8 }$&${0.592}$&${10}$&${0.331}$&${12}$&${0.308}$&${7}$&${0.143}$&${16}$&${0.286}$\\
    ${         }$       &$2^8$   &${234}$&${13}$&${4.843}$&${25}$&${1.625}$&${8 }$&${3.313}$&${10}$&${1.025}$&${15}$&${1.424}$&${8}$&${0.416}$&${20}$&${0.610}$\\
    ${         }$       &$2^9$   &${401}$&${13}$&${43.68}$&${26}$&${5.710}$&${8 }$&${56.52}$&${11}$&${3.811}$&${25}$&${34.69}$&${8}$&${1.071}$&${25}$&${2.057}$\\
    \hline
    ${         }$       &$2^5$   &${66 }$&${24}$&${0.041}$&${26}$&${0.114}$&${11}$&${0.089}$&${11}$&${0.139}$&${11}$&${0.085}$&${6}$&${0.091}$&${15}$&${0.089}$\\
    ${(1.7,1.9)}$       &$2^6$   &${127}$&${27}$&${0.205}$&${30}$&${0.235}$&${12}$&${0.181}$&${12}$&${0.215}$&${13}$&${0.128}$&${6}$&${0.104}$&${19}$&${0.118}$\\
    ${\omega^\star=0.83}$&$2^7$&${244}$&${30}$&${1.557}$&${34}$&${0.598}$&${13}$&${0.904}$&${13}$&${0.407}$&${15}$&${0.385}$&${6}$&${0.140}$&${25}$&${0.383}$\\
    ${         }$       &$2^8$   &${467}$&${33}$&${11.62}$&${38}$&${2.350}$&${14}$&${5.616}$&${15}$&${1.405}$&${17}$&${1.503}$&${7}$&${0.401}$&${30}$&${0.684}$\\
    ${         }$       &$2^9$   &${899}$&${37}$&${124.3}$&${43}$&${8.658}$&${15}$&${87.13}$&${16}$&${5.464}$&${27}$&${40.18}$&${7}$&${1.103}$&${43}$&${3.594}$\\
    \hline
			\end{tabular}
		\end{center}
	\end{small}
	\label{tab:T21}
\end{table}

Table \ref{tab:T21} presents the number of iterations and CPU times required by the proposed multigrid methods and preconditioners. The $\tau$ preconditioner $P_{\tau}$ stands out as the most robust choice, both in terms of iteration number and CPU time. The geometric multigrid used as a preconditioner is also a good option, although the robustness of multigrid methods declines in the case of anisotropic problems, i.e., when $(\alpha, \beta) = (1.7, 1.9)$. In such a case, it is advisable to adopt the strategies proposed in \cite{donatelli2020multigrid}.

\paragraph{Example 3.} This example is taken from \cite{pan2021efficient}, with $\Omega=[\,0,2\,]\times[\,0,2\,]$. The diffusion coefficients are
\begin{equation}\label{eq:2D_Exp2_coef}
    {c}(x,y)=1,\quad {e}(x,y)=1+xy,
\end{equation}
and the source term is built from the exact solution given as
\begin{equation}\label{eq:2D_Exact2}
    u(x,y)=x^{4}(\,2-x\,)^{4}y^{4}(\,2-y\,)^{4}.
\end{equation}

Due to the nonsymmetry of the coefficient matrix caused by diffusion coefficients, the CG method has been replaced with GMRES. Nevertheless, the numerical results in Table \ref{tab:T22M} are comparable to those in Table \ref{tab:T21}. Indeed, the $\tau$ preconditioner remains the best option, and the geometric multigrid preconditioner provides a good and robust alternative, especially when the problem is isotropic, i.e., when $\alpha \approx \beta$, also when applied to the band approximation.

\begin{table}
	\caption{Example 3: Number of iterations for different values of $\alpha$ and $\beta$ with $M_{1}=M_{2}$.}
	\begin{small}
		\setlength{\tabcolsep}{2.5pt}
		\begin{center}
			\begin{tabular}{c @{\qquad} c @{\qquad} c @{\qquad}c c @{\qquad}c c @{\qquad}c c @{\qquad} c c}
				$(\alpha,\beta)$&$M_{1}+1$& ${\mbox{GMRES}}$& $\widetilde{P}{V_{1}^{1}}$& ${\mbox{T(s)}}$& ${P{_{s}{V}_{1}^{1}}}$&${\mbox{T(s)}}$& ${P_{\tau}}$&${\mbox{T(s)}}$& ${P_{S}}$&${\mbox{T(s)}}$\\ \hline
	${         }$      &$2^4$   &${49 }$&${11}$&${0.147}$&${9 }$&${0.100}$&${12}$&${0.114}$&${19}$&${0.118}$\\
	${(1.1,1.2)}$      &$2^5$   &${94 }$&${12}$&${0.181}$&${11}$&${0.110}$&${13}$&${0.135}$&${23}$&${0.140}$\\
${\omega^\star=0.83}$&$2^6$   &${163}$&${14}$&${0.258}$&${12}$&${0.159}$&${13}$&${0.160}$&${27}$&${0.201}$\\
    ${         }$      &$2^7$   &${267}$&${15}$&${0.537}$&${15}$&${0.407}$&${13}$&${0.250}$&${31}$&${0.441}$\\
    \hline
    ${         }$      &$2^4$   &${51 }$&${11}$&${0.147}$&${10}$&${0.101}$&${13}$&${0.118}$&${20}$&${0.120}$\\
	${(1.5,1.5)}$      &$2^5$   &${95 }$&${12}$&${0.182}$&${12}$&${0.112}$&${13}$&${0.133}$&${24}$&${0.146}$\\
${\omega^\star=0.85}$&$2^6$   &${171}$&${12}$&${0.240}$&${13}$&${0.164}$&${14}$&${0.168}$&${27}$&${0.196}$\\
    ${         }$      &$2^7$   &${301}$&${12}$&${0.531}$&${17}$&${0.412}$&${14}$&${0.261}$&${31}$&${0.423}$\\
    \hline
    ${         }$      &$2^4$   &${62 }$&${13}$&${0.146}$&${12}$&${0.103}$&${12}$&${0.116}$&${21}$&${0.122}$\\
    ${(1.7,1.9)}$      &$2^5$   &${129}$&${14}$&${0.189}$&${14}$&${0.115}$&${13}$&${0.134}$&${26}$&${0.150}$\\
${\omega^\star=0.83}$&$2^6$   &${262}$&${15}$&${0.281}$&${14}$&${0.164}$&${14}$&${0.167}$&${31}$&${0.214}$\\
    ${         }$      &$2^7$   &${524}$&${15}$&${0.544}$&${18}$&${0.425}$&${14}$&${0.253}$&${37}$&${0.483}$\\
     \hline
     ${         }$     &$2^4$   &${60 }$&${11}$&${0.144}$&${11}$&${0.103}$&${12}$&${0.144}$&${21}$&${0.121}$\\
    ${(1.9,1.9)}$      &$2^5$   &${124}$&${12}$&${0.183}$&${11}$&${0.109}$&${13}$&${0.132}$&${25}$&${0.148}$\\
${\omega^\star=0.82}$&$2^6$   &${252}$&${12}$&${0.249}$&${11}$&${0.153}$&${13}$&${0.164}$&${30}$&${0.208}$\\
     ${         }$     &$2^7$   &${504}$&${12}$&${0.444}$&${13}$&${0.388}$&${13}$&${0.247}$&${35}$&${0.445}$\\
    \hline
			\end{tabular}
		\end{center}
	\end{small}
	\label{tab:T22M}
\end{table}
\section{Application to image deblurring}\label{sec:imdeblur}

Fractional differential operators have been investigated to enhance diffusion in image denoising and deblurring  \cite{yang2016fractional,antil2017spectral,aleotti2023fractional}. In this section, we investigate the effectiveness of the previous preconditioning strategies to the Tikhonov regularization problem
\begin{equation}\label{eq:tik}
   \min_{\v{u}\in\R^M} \|B_M \v{u}-\v{m}\|_2^2 + \mu \v{u}^T A_M^{(\alpha,\beta)}\v{u},
\end{equation}
where $\v{m}$ is the noisy and blurred observed image, $B_M$ is the discrete convolution operator associated with the blurring phenomenon, and $\mu>0$ is the regularization parameter.

The solution of the least square problem \eqref{eq:tik} can be obtained by solving the linear system
\begin{equation}\label{eq:sistik}
   \left(B_M^TB_M + \mu A_M^{(\alpha,\beta)}\right)\v{u}=\v{m},
\end{equation}
by the preconditioned CG method since the coefficient matrix is positive definite.
The matrix $B_M$ exhibits a block Toeplitz with Toeplitz blocks structure as the matrix $A_M^{(\alpha,\beta)}$, but the spectral behavior is completely different since the ill-conditioned subspace is very large and intersects substantially the high frequencies. Therefore, applying multigrid methods is quite challenging, see \cite{donatelli2005multigrid}, and requires further investigation in the future. Here, we show the effectiveness of the $\tau$ preconditioner in comparison with the Strang circulant preconditioner.

We consider the true \textsf{satellite} image of $128 \times 128$ pixels in Figure \ref{fig:imm} (a). The observed image in Figure \ref{fig:imm} (b) is affected by a Gaussian blur and $5\%$ of white Gaussian noise. We choose the fractional derivative $\alpha=\beta=1.1$ and the diffusion coefficients $c(x,y)=e(x,y)=1$. The matrix $A_M^{(\alpha,\beta)}$ is defined as in \eqref{eq: 2D_coefficient_matrix}, while the Strang circulant and the $\tau$ preconditioners are defined in \eqref{eq:PS2D} and \eqref{eq: tau_based_preconditioner}, respectively.
Since the quality of the reconstruction depends on the model \eqref{eq:tik}, all the different solvers provide a similar reconstruction. The reconstructed image for $\mu=10^{-4}$ is shown in Figure \ref{fig:imm} (c).

Various strategies exist for estimating the parameter $\mu$, see e.g. \cite{hansen1998rank}, but such estimation is beyond the scope of this work. Instead, we test different values of $\mu$. The linear system \eqref{eq:tik} is solved using the built-in \textit{pcg} Matlab function with a tolerance of $10^{-6}$.
Table \ref{tab:imm} reports the number of iterations and the related restoration error for various values of $\mu$.
It is noteworthy that the $\tau$ preconditioner demonstrates a significant speedup compared to both CG without preconditioning and PCG with the Strang circulant preconditioner for all relevant values of $\mu \in [10^{-5}, 10^{-3}]$.

We should also point out that the considered images show a black background and hence all the various BCs are equivalent in terms of the precision of the reconstruction. However, in a general setting, we observe that the most precise BCs are related to the $\tau$ algebra (see \cite{tau-AR-BCs-1,tau-AR-BCs-2} and references therein).

  \begin{figure}
		\centering
		\begin{subfloat}[]
			{\resizebox*{4cm}{!}     {\includegraphics[width=\textwidth]{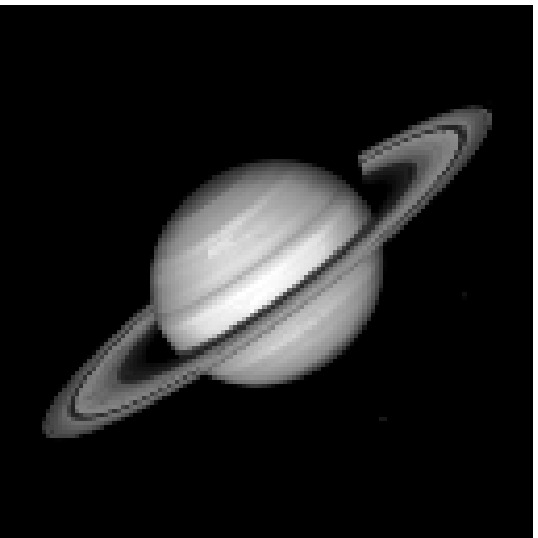}}}
		\end{subfloat}
              \hspace{0.5cm}
             \begin{subfloat}[]
			{\resizebox*{4cm}{!}{\includegraphics[width=\textwidth]{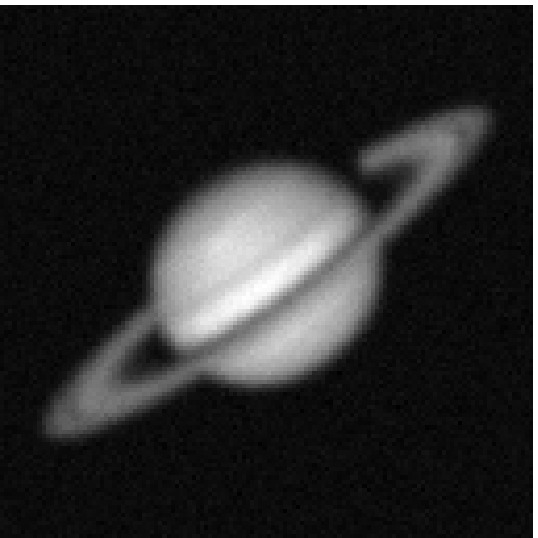}}}
		\end{subfloat}
              \hspace{0.5cm}
            \begin{subfloat}[]
			{\resizebox*{4cm}{!}{\includegraphics[width=\textwidth]{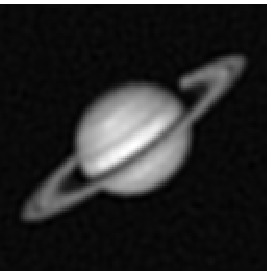}}}
		\end{subfloat}

		\caption{Image deblurring example: (a) true image, (b) observed image, (c) restored image for $\mu=10^{-4}$.}
	\label{fig:imm}
\end{figure}

\begin{table}
	\caption{Image deblurring example: number of iterations and relative restoration error (RRE) for different values of $\mu$.}
	\begin{small}
		\setlength{\tabcolsep}{2.5pt}
		\begin{center}
			\begin{tabular}{c @{\qquad} c @{\qquad}c  @{\qquad}c @{\qquad}c}
$\mu$& ${\mbox{CG}}$&  ${P_{\tau}}$& ${P_{S}}$ & RRE\\
\hline
$10^{-3}$ & 12 & 3 & 12 & 1.54e-01\\
$10^{-4}$ & 15 & 4 & 9 & 1.12e-01\\
$10^{-5}$ & 36 & 6 & 16 & 1.15e-01\\
$10^{-6}$ & 93 & 10 & 44 & 2.21e-01\\
\hline
			\end{tabular}
		\end{center}
	\end{small}
	\label{tab:imm}
\end{table}

\section{Conclusions}\label{sec:end}
We discussed the convergence analysis of the multigrid method applied to a Riesz problem and compared it with state of-the-art techniques. Associated numerical experiments highlight that $\tau$ preconditioning is the best performing method. An application of the latter to an image deblurring problem with Tikhonov regularization is given. This specific application will be considered in future research works, especially considering numerical methods for nonlinear models that require solving a linear problem as an inner step, see e.g. \cite{cai2016regularization}.




\section*{Acknowledgements}
The work of the authors is supported by the GNCS-INdAM project CUP E53C22001930001 and project  CUPE53C23001670001.
The work of the second author is partially funded by MUR -- PRIN 2022, grant number 2022ANC8HL.
The third author
acknowledges the MUR Excellence Department Project MatMod@TOV awarded to the Department of Mathematics, University of
Rome Tor Vergata, CUP E83C23000330006.
The work of the fourth author is funded by the European High-Performance Computing Joint Undertaking  (JU) under grant agreement No 955701. The JU receives support from the European Union’s Horizon 2020 research and innovation program and Belgium, France, Germany, and Switzerland.
Furthermore, the fourth is grateful for the support of the Laboratory of Theory, Economics and Systems – Department of Computer Science at Athens University of Economics and Business.

\bibliography{Bibtex}
\bibliographystyle{unsrt}
\end{document}